\documentclass[preprint,12pt]{elsarticle}

\usepackage{amsfonts,amsthm,amsmath}
\usepackage[dvips]{epsfig}
\usepackage[dvips]{graphicx}
\usepackage{tikz}

\textwidth 6.5in
\topmargin 0pt
\oddsidemargin 0pt
\evensidemargin 0pt
\oddsidemargin 0pt
\marginparwidth 0pt

\theoremstyle{plain}
\numberwithin{equation}{section}
\newtheorem{theorem}[equation]{Theorem}
\newtheorem{lemma}[equation]{Lemma}

\newtheorem{example}[equation]{Example}

\newtheorem{conjecture}[equation]{Conjecture}

\begin{document}

\begin{frontmatter}

\title{The most unbalanced words $0^{q-p}1^p$ and majorization}

\author{Jetro Vesti}
\ead{jejove@utu.fi}

\address{University of Turku, Department of Mathematics and Statistics, 20014 Turku, Finland}

\begin{abstract}
A finite word $w\in\{0,1\}^*$ is \emph{balanced} if for every equal-length factors $u$ and $v$ of every cyclic shift of $w$ we have $||u|_1-|v|_1|\leq 1$.
This new class of finite words were defined in \cite{jz}.

In \cite{j}, there was proved several results considering finite balanced words and majorization.
One of the main results was that the base-2 orbit of the balanced word is the least element
in the set of orbits with respect to partial sum. It was also proved that the product of the
elements in the base-2 orbit of a word is maximized precisely when the word is balanced.

It turns out that the words $0^{q-p}1^p$ have similar extremal properties, opposite to the balanced words,
which makes it meaningful to call these words \emph{the most unbalanced words}.
This article contains the counterparts of the results mentioned above.
We will prove that the orbit of the word $u=0^{q-p}1^p$ is the greatest element in the set of orbits with respect to partial sum
and that it has the smallest product. We will also prove that $u$ is the greatest element in the set of orbits with respect to partial product.
\end{abstract}

\begin{keyword} Combinatorics on words, Balanced word, Majorization.
\MSC 68R15
\end{keyword}
\journal{Discrete Math., Alg. and Appl. $\copyright$ [2015] [World Scientific]}

\end{frontmatter}


\section{Introduction}

\emph{Sturmian words} were first studied by Morse and Hedlund in \cite{mh}, since then being one of the core interests in combinatorics on words,
and \emph{finite balanced words} were first introduced and studied in \cite{jz}.
These words are closely linked because every Sturmian word is \emph{balanced}, as an infinite word, and every finite balanced word is a factor of some Sturmian word.
Every factor of a Sturmian word is not however necessarily a finite balanced word.
For a well-known survey on Sturmian words one should look \cite{bs}.
In this article we will study only finite words. 

Besides \emph{balanced words}, this paper is related more closely to \emph{the most unbalanced words} $0^{q-p}1^p$.
In \cite{j} there were given many new properties for balanced words in terms of majorization.
Majorization is a common notion in many branches of mathematics and has many applications, for example in probability, statistics and graph theory.
We will notice that also the words $0^p1^{q-p}$ have many extremal properties in terms of majorization, opposite to the balanced words.
This makes it meaningful to call these words the most unbalanced words.

In section 2 we will give the counterpart of Theorem 2.3 from \cite{j},
which says that the base-2 orbit of the balanced word is the least element in the set of orbits with respect to partial sum.
In this article we will prove that the base-2 orbit of the most unbalanced word is the greatest element.
Hence, this result places every other word, with the same number of ones and zeros, between these two extremal words.

In section 3 we will give the counterpart of Theorem 1.2 from \cite{j},
which says that the product of the elements in the base-2
orbit of a word is maximized precisely when the word is balanced.
In this article we will prove that the product is minimized when the word is the most unbalanced word.
This is done in the case where the number of zeros is greater than the number of ones.

In section 4 we will do the same for partial product what we did for partial sum in section 2.
The result that the base-2 orbit of the balanced word is the least element also with respect to partial product
has not been proved, but it seems very likely to be true. In any case, we will prove that the base-2 orbit of the most
unbalanced word is the greatest element also with respect to partial product.
For this, we will use the result from section 3, which means that the result is
proved in the case where the number of zeros is greater than the number of ones.

\subsection{Definitions and notation}

We denote by $A$ an \emph{alphabet}, i.e. a non-empty finite set of symbols called \emph{letters}.
A \emph{word} $w$ over $A$ is a finite sequence $w=w_1 w_2 \ldots w_n$, where $\forall i: w_i\in A$.
The \emph{empty word} $\epsilon$ is the empty sequence.
The set $A^*$ of all words over $A$ is a free monoid under the operation of concatenation
with identity element $\epsilon$ and set of generators $A$.
The free semigroup $A^+=A^*\setminus \{\epsilon\}$ is the set of non-empty words over $A$.

The \emph{length} of a word $w=w_1 w_2 \ldots w_n\in A^n$ is denoted by $|w|=n$.
The empty word is the unique word of length $0$.
By $|w|_a$, where $a\in A$, we denote the number of occurrences of the letter $a$ in $w$.
A word $x$ is a \emph{factor} of a word $w\in A^*$ if $w=uxv$, for some $u,v\in A^*$.
If $u=\epsilon$ ($v=\epsilon$) then we say that $x$ is a \emph{prefix} (resp. \emph{suffix}) of $w$.
The set $\textrm{F}(w)$ is the set of all factors of $w$
and the set $\textrm{Alph}(w)$ is the set of all letters that occur in $w$.

Other basic definitions and notation in combinatorics on words can be found from Lothaires books \cite{l1} and \cite{l2}. 

\subsection{Preliminaries}

The \emph{lexicographic order} of words $u=u_1\ldots u_n$ and $v=v_1\ldots v_n$ in $A^n$ is defined by:
$u<v$ if there exists $j\in\{1,\ldots,n\}$ such that $u_k=v_k$ for all $k=1,\ldots,j-1$ and $u_j<v_j$.
We denote $u\leq v$ if either $u<v$ or $u=v$.
The \emph{cyclic shift} $\sigma: A^n\to A^n$ is defined by $\sigma(w_1\ldots w_n)=w_2\ldots w_n w_1$.
The \emph{orbit} $\mathcal{O}(w)$ of a word $w\in A^n$ is the vector
$$\mathcal{O}(w)=(\mathcal{O}_1(w),\ldots,\mathcal{O}_n(w)),$$
where the words $\mathcal{O}_i(w)$ are the iterated cyclic shifts $w,\sigma(w),\ldots,\sigma^{n-1}(w)$
arranged in lexicographic order from the smallest to the largest.

In this paper we will restrict ourselves to binary alphabet, i.e. $A=\{0,1\}$, where we define that $0<1$.
We will denote by $(w)_2=\sum_{i=1}^{n}{w_i 2^{n-i}}$ the base-2 expansion of a word $w=w_1w_2\ldots w_n$
and define the \emph{base-2 orbit} of $w$ by
$$\mathcal{I}(w)=(\mathcal{I}_1(w),\ldots,\mathcal{I}_n(w))=((\mathcal{O}_1(w))_2,\ldots,(\mathcal{O}_n(w))_2).$$

A finite word $w\in\{0,1\}^*$ is \emph{balanced} if for every equal-length factors $u$ and $v$ of every cyclic shift of $w$ we have $||u|_1-|v|_1|\leq 1$.
If a word is not balanced then it is \emph{unbalanced}.
Notice that for example the word $001010$ is not balanced even though it is a factor of a Sturmian word and hence a factor of an \emph{infinite} balanced word.

Let $p$ and $q$ be coprime integers such that $1\leq p < q$.
$\mathcal{W}_{p,q}$ will denote the set of binary words $w\in A^q$ such that $|w|_1=p$ and $|w|_0=q-p$.
From \cite{bs} we know that there are $q$ balanced words in $\mathcal{W}_{p,q}$ and they are all in the same orbit.
We define $\mathbb{W}_{p,q}$ to be the set of all orbits in $\mathcal{W}_{p,q}$ and get that there is a unique balanced orbit in each $\mathbb{W}_{p,q}$.
Because the orbit depends only from one of its components, we will use the lexicographically smallest component $\mathcal{O}_1(w)$ to represent the orbit.
So for example the orbit $(00101,01001,01010,10010,10100)$ will be represented notationally by $00101$.

\begin{example}
If $(p,q)=(2,5)$ then the set of all orbits is $\mathbb{W}_{2,5}=\{00011,00101\}$,
where $00011=(00011,00110,01100,10001,11000)$, $00101=(00101,01001,01010,10010,10100)$.
The base-2 orbits are $\mathcal{I}(00011)=(3,6,12,17,24)$ and $\mathcal{I}(00101)=(5,9,10,18,20)$.
\end{example}

For $w,w'\in\mathbb{W}_{p,q}$ the base-2 orbit $\mathcal{I}(w)=(w_1,\ldots,w_q)$ of $w$ is said to
\emph{majorize} the base-2 orbit $\mathcal{I}(w')=(w'_1,\ldots,w'_q)$ of $w'$, denoted $w' \prec w$, if
$$\sum^{i}_{k=1}{w'_k} \geq \sum^{i}_{k=1}{w_k}\quad {\rm for}\ 1\leq i\leq q.$$
The majorization defines a partial order on the set $\mathbb{W}_{p,q}$.
We can easily calculate that $\sum^{q}_{k=1}{w'_k} = \sum^{q}_{k=1}{w_k} = (2^{q}-1)p$, which was stated already in \cite{jz} after Definition 2.1.
We denote the partial sums of the orbit of $w$ by $\mathcal{S}_i(w)=\sum^{i}_{k=1}{\mathcal{I}_k(w)}$.

Similarly, for $w,w'\in\mathbb{W}_{p,q}$ the base-2 orbit of $w$ is said to
\emph{majorize with respect to product} the base-2 orbit of $w'$, denoted $w' \prec_p w$, if
$$\prod^{i}_{k=1}{w'_k} \geq \prod^{i}_{k=1}{w_k}\quad {\rm for}\ 1\leq i\leq q.$$
The majorization with respect to product also defines a partial order on the set $\mathbb{W}_{p,q}$.
We denote the partial products of the orbit of $w$ by $\mathcal{P}_i(w)=\prod^{i}_{k=1}{\mathcal{I}_k(w)}$.

Let us then present Jenkinsons theorems from \cite{j}.

\begin{theorem}\label{t1}(\cite{j}, Thm. 2.3)
For any coprime integers $1\leq p < q$, the unique balanced orbit $b\in\mathbb{W}_{p,q}$
is the least element in $(\mathbb{W}_{p,q},\prec)$. In other words, for any $w\in\mathbb{W}_{p,q}$,
$$ \mathcal{S}_i(b) \geq \mathcal{S}_i(w)\quad {\rm for\ all}\ 1\leq i\leq q.$$
\end{theorem}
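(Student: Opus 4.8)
\emph{Proof strategy.} The plan is to translate the partial sums into a statement about the columns of the sorted-conjugate matrix and then show that the balanced orbit realises the extremal column profile. For $w\in\mathbb{W}_{p,q}$ let $M(w)$ be the $q\times q$ binary matrix whose $k$-th row is $\mathcal{O}_k(w)$, the $k$-th cyclic shift of $w$ in lexicographic order; then $\mathcal{I}_k(w)=\sum_{c=1}^{q}M(w)_{k,c}\,2^{q-c}$, and summing over the first $i$ rows,
$$\mathcal{S}_i(w)=\sum_{c=1}^{q}2^{q-c}\,A_c^{(i)}(w),\qquad A_c^{(i)}(w):=\#\{\,k\le i:\ M(w)_{k,c}=1\,\},$$
so $\mathcal{S}_i(w)$ is governed by how the $1$'s of the top $i$ rows of $M(w)$ are distributed among the columns, the leftmost columns (largest powers of $2$) being the most valuable. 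Every column of $M(w)$ contains exactly $p$ ones, and the top-$i$ block has $ip$ ones in total; the theorem asserts that the balanced orbit pushes those $ip$ ones as far to the left as is consistent with their lying in the $i$ lexicographically smallest shifts, simultaneously for every $i$.

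The first ingredient I would establish is a structural description of $M(b)$. A word is balanced precisely when its Burrows--Wheeler transform, i.e. the last column of its sorted-conjugate matrix, is the already-sorted word $0^{q-p}1^p$; combined with the cyclic structure of the conjugates, this forces every column of $M(b)$ to be a cyclic shift of the most unbalanced word $u=0^{q-p}1^p$ (successive columns being $u$ shifted by $p$ positions $\bmod q$). Hence $A_c^{(i)}(b)$ is just the number of $1$'s in a length-$i$ prefix of a rotation of $u$, an explicit quantity, and $\mathcal{S}_i(b)$ admits a closed form. In particular the $i$ smallest shifts of $b$ begin with the longest possible common run of $0$'s, and their column profile is the ``$0$'s front-loaded, $1$'s back-loaded'' pattern one expects from the three-distance description of rotations by $p/q$.

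It then remains to prove $\mathcal{S}_i(w)\le\mathcal{S}_i(b)$ for arbitrary $w$. I would do this by an equalizing exchange: if $w$ is not balanced, one locates in a suitable cyclic shift a witness to the imbalance and swaps a $1$ to an earlier position and a $0$ to a later one so that the resulting word $w'\in\mathcal{W}_{p,q}$ is strictly closer to balanced in a well-founded measure such as the total discrepancy $\sum_k\bigl(|F_k|_1-\lfloor kp/q\rfloor\bigr)$ over cyclic factors $F_k$; iterating reaches $b$. The key lemma is that every such exchange satisfies $\mathcal{S}_i(w')\ge\mathcal{S}_i(w)$ for all $i$, which by transitivity of $\prec$ gives the theorem. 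An exchange-free alternative is to bound $\mathcal{S}_i(w)$ directly: the constraints ``the top $i$ rows are the $i$ lexicographically smallest conjugates, each with $p$ ones'' pin down the feasible column profiles tightly enough that $\sum_c 2^{q-c}A_c^{(i)}(w)$ cannot exceed the closed form computed for $b$.

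The main obstacle is the exchange lemma $\mathcal{S}_i(w')\ge\mathcal{S}_i(w)$. A single swap of two letters of $w$ perturbs all $q$ conjugates at once --- each by a two-letter swap whose effect on the base-$2$ value is context-dependent in sign --- and it also reshuffles the lexicographic order, so controlling every $\mathcal{S}_i$ simultaneously is delicate. I would attack it through the column picture, comparing the weighted profiles $\bigl(A_c^{(i)}(w)\bigr)_c$ and $\bigl(A_c^{(i)}(w')\bigr)_c$ and transferring $1$'s from the first to the second without ever moving mass to the right; since the weights $2^{q-c}$ are super-decreasing, a naive columnwise comparison is insufficient and one must account for carries, which is where the real work lies. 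The exchange-free route trades this for the (also nontrivial) task of characterising the feasible top-$i$ column profiles, but has the advantage of touching $b$ only through its explicit description.
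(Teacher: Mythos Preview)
The paper does not contain a proof of this statement. Theorem~\ref{t1} is quoted verbatim from Jenkinson's article~\cite{j} (there as Theorem~2.3) and is presented here purely as background: the present paper's own contributions are the \emph{counterparts} for the most unbalanced word $0^{q-p}1^p$ (Theorems~\ref{t4}, \ref{t5}, \ref{t6}), and those are the only results it proves. So there is no in-paper proof to compare your proposal against.

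As for the proposal itself: it is a strategy outline rather than a proof, and you candidly flag the real gap yourself. The reformulation via the sorted-conjugate matrix and the column counts $A_c^{(i)}(w)$ is sound, and the Burrows--Wheeler characterisation of balanced words does give the structure of $M(b)$ you describe. But the heart of the argument---the exchange lemma asserting that a single ``balancing'' swap in $w$ can only increase every partial sum $\mathcal{S}_i$---is left as a difficulty to be overcome, and you correctly identify why it is delicate (a swap perturbs all conjugates and reshuffles the lexicographic order simultaneously). Until that lemma is actually established, or the alternative route of bounding the feasible top-$i$ column profiles is carried through, the proposal remains a plan rather than a proof. If you want to see how Jenkinson handles it, you will need to consult~\cite{j} directly.
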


\begin{theorem}\label{t2}(\cite{j}, Thm. 1.2)
Suppose $1\leq p < q$ are coprime integers. For $w\in \mathbb{W}_{p,q}$ the product
$P(w)=\prod^{q}_{i=1}{\mathcal{I}_i(w)}$ is maximized precisely when $w$ is balanced.
\end{theorem}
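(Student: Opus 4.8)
The plan is to derive Theorem~\ref{t2} from Theorem~\ref{t1} by a standard Schur-concavity argument, so that the partial-sum statement does all the combinatorial work and the product statement becomes essentially a convexity consequence. First I would record the elementary fact that, for a word $w\in\mathbb{W}_{p,q}$, the components $\mathcal{I}_1(w),\dots,\mathcal{I}_q(w)$ of the base-2 orbit are positive integers listed in increasing order: positivity holds because $p\ge 1$ forces every cyclic shift of $w$ to contain a $1$, and the increasing order is built into the definition of $\mathcal{O}(w)$ together with the fact that for binary words of a fixed length the lexicographic order and the order of base-2 values agree. Coprimality of $p$ and $q$ also guarantees that $w$ is primitive, so the $q$ values are in fact distinct, which is what will yield the ``precisely'' in the conclusion.

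Next I would translate between the paper's order $\prec$ and classical majorization. Since $\sum_{k=1}^q\mathcal{I}_k(w)=(2^q-1)p$ is the same for every $w$, the condition $w'\prec w$, i.e. $\sum_{k=1}^i\mathcal{I}_k(w')\ge\sum_{k=1}^i\mathcal{I}_k(w)$ for all $i$ with equality at $i=q$, is exactly the statement that the vector $\mathcal{I}(w)$ majorizes the vector $\mathcal{I}(w')$ in the usual sense: it is the standard reformulation of majorization through partial sums of the increasing rearrangement, and here both vectors are already sorted increasingly. Hence Theorem~\ref{t1}, which says the balanced orbit $b$ is the $\prec$-least element, is equivalent to saying that $\mathcal{I}(w)$ majorizes $\mathcal{I}(b)$ for every $w\in\mathbb{W}_{p,q}$; in majorization language $\mathcal{I}(b)$ is the ``flattest'' orbit.

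Now the product. I would invoke (or reprove in one line) the strict Schur-concavity of $x\mapsto\prod_i x_i$ on $\mathbb{R}_{>0}^q$: writing $\prod_i x_i=\exp\bigl(\sum_i\log x_i\bigr)$ and using that $\log$ is strictly concave, if $x$ majorizes $y$ then $\prod_i x_i\le\prod_i y_i$, with equality only when $x$ is a permutation of $y$. Applying this with $x=\mathcal{I}(w)$ and $y=\mathcal{I}(b)$ gives $P(w)\le P(b)$ for every $w$. For the strictness I would note that two orbits whose base-2 orbits are permutations of one another must coincide: the map $v\mapsto(v)_2$ is injective on words of length $q$, so equality of the (multi)sets of base-2 values forces equality of the sets of cyclic shifts, hence of the orbits. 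Therefore $P(w)=P(b)$ only when $w=b$, and the product is maximized precisely at the balanced word.

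A remark on where the difficulty sits: granting Theorem~\ref{t1} there is no real obstacle, and the only points needing care are getting the direction of the majorization right (identifying which vector is the flat one) and handling the equality case. If one instead wanted a proof that does not route through Theorem~\ref{t1}, the hard part would be re-establishing the partial-sum domination directly --- essentially the combinatorial content of the counterpart proved in Section~2, concerning how the cyclic shifts of a balanced word interleave --- which the convexity toolkit alone does not provide.
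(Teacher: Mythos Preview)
Your argument is correct: deriving the product statement from the partial-sum statement via the strict Schur-concavity of $x\mapsto\prod_i x_i$ on $\mathbb{R}_{>0}^q$ is sound, and your handling of the direction of majorization and of the equality case is accurate.

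However, note that the present paper does not actually prove Theorem~\ref{t2}: it is quoted as background from Jenkinson's paper \cite{j} (Theorem~1.2 there), alongside Theorem~\ref{t1}. So there is no proof in this paper to compare against. That said, your route---reducing the product result to the majorization result---is precisely the philosophy of \cite{j}, whose title is ``Balanced words and majorization'', and is the intended way these two theorems are meant to relate. In this paper the analogous logical dependence runs the other way for the counterparts: Theorem~\ref{t5} (product minimization at $0^{q-p}1^p$) is proved directly by an ad hoc multiplier argument, and then Theorem~\ref{t6} (partial-product majorization) invokes Theorem~\ref{t5} for the final index.
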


The next conjecture, stating Theorem \ref{t1} for partial product, is very likely to be true.
One might be able to prove it using similar idea that Jenkinson used for partial sum.
In any case, we will prove the counterpart of it for the most unbalanced word.

\begin{conjecture}\label{c3}
For any coprime integers $1\leq p < q$, the unique balanced orbit $b\in\mathbb{W}_{p,q}$
is the least element in $(\mathbb{W}_{p,q},\prec_p)$. In other words, for any $w\in\mathbb{W}_{p,q}$,
$$ \mathcal{P}_i(b) \geq \mathcal{P}_i(w)\quad {\rm for\ all}\ 1\leq i\leq q.$$
\end{conjecture}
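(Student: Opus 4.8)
The plan is to derive Conjecture~\ref{c3} directly from Theorem~\ref{t1}, so that essentially no new combinatorics on words is needed. Fix $w\in\mathbb{W}_{p,q}$ and write $\mathcal{I}(b)=(b_1,\dots,b_q)$ and $\mathcal{I}(w)=(w_1,\dots,w_q)$. By definition these are vectors of positive integers listed in increasing order, and (as noted just after the definition of $\prec$) they have the same total $\sum_{k=1}^{q}b_k=\sum_{k=1}^{q}w_k=(2^q-1)p$. Theorem~\ref{t1} says $\sum_{k\le i}b_k\ge\sum_{k\le i}w_k$ for all $i$; together with the equality of the totals this is exactly the statement that $\mathcal{I}(b)$ is majorized by $\mathcal{I}(w)$ in the classical sense --- the base-$2$ orbit of the balanced word is the ``flattest'' vector among all orbits of $\mathbb{W}_{p,q}$.

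So it is enough to prove the following self-contained lemma and then apply it with $x=\mathcal{I}(b)$, $y=\mathcal{I}(w)$: \emph{if $x_1\le\dots\le x_q$ and $y_1\le\dots\le y_q$ are positive reals with $\sum_k x_k=\sum_k y_k$ and $\sum_{k\le i}x_k\ge\sum_{k\le i}y_k$ for every $i$, then $\prod_{k\le i}x_k\ge\prod_{k\le i}y_k$ for every $i$.} With this, $\mathcal{P}_i(b)\ge\mathcal{P}_i(w)$ for all $i$, which is the conjecture; more precisely, the lemma shows that on $\mathbb{W}_{p,q}$ the relation $\prec$ implies $\prec_p$, so the $\prec$-least element provided by Theorem~\ref{t1} is automatically $\prec_p$-least. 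Phrased invariantly, the lemma asserts that for each fixed $i$ the symmetric function $f_i$ sending a positive vector $z$ to the product of its $i$ smallest coordinates is Schur-concave on $(0,\infty)^q$.

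To prove the lemma I would verify the Schur--Ostrowski criterion for $f_i$. Away from the measure-zero set where the $i$-th and $(i+1)$-th smallest coordinates coincide, $f_i$ is differentiable with $\partial f_i/\partial z_j$ equal to $f_i/z_j$ when $z_j$ is among the $i$ smallest coordinates and $0$ otherwise; a short check of the three cases (both indices small, one small and one large, both large) gives $(z_j-z_k)(\partial f_i/\partial z_j-\partial f_i/\partial z_k)\le 0$ in each, and since $f_i$ is continuous this yields Schur-concavity. Equivalently and just as elementarily, one may use that a majorized vector is reached from the majorizing one by finitely many $T$-transforms, and note that replacing coordinates $y_j\le y_k$ by $y_j+t$ and $y_k-t$ with $0\le t\le y_k-y_j$ multiplies $\prod_{k\le i}(\text{smallest})$ by a factor $\ge1$ --- the only nontrivial case being when both indices lie among the $i$ smallest, where the factor is $(y_j+t)(y_k-t)/(y_jy_k)=1+t(y_k-y_j-t)/(y_jy_k)\ge1$. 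By the Hardy--Littlewood--P\'olya characterization of majorization, Schur-concavity of $f_i$ then gives $f_i(x)\ge f_i(y)$, completing the lemma.

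The genuinely hard part has, in this approach, already been carried out: it is Theorem~\ref{t1}. What remains demands care only at two points. First, the direction of majorization: Theorem~\ref{t1} makes $b$ the \emph{least} element for $\prec$, and it is the equal totals that convert this into ``$\mathcal{I}(b)$ is the \emph{majorized}, hence flattest, vector'', which is what the lemma consumes. Second, the bookkeeping in the case analysis for $f_i$, in particular handling ties and the re-sorting that occurs when a shifted coordinate crosses the rank-$i$/rank-$(i+1)$ boundary. The paper suggests instead imitating Jenkinson's combinatorial argument for partial sums; that route is presumably available too, but the reduction above shows that $\prec_p$ is implied by $\prec$ on $\mathbb{W}_{p,q}$, so Conjecture~\ref{c3} follows formally from Theorem~\ref{t1} with no further combinatorics on words.
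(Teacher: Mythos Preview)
The paper does not actually prove this statement: it is stated as Conjecture~\ref{c3} and left open, with the remark that one ``might be able to prove it using similar idea that Jenkinson used for partial sum.'' Your proposal therefore goes beyond the paper, and it is correct. The reduction to Theorem~\ref{t1} via the Schur-concavity of $f_i(z)=\prod_{k\le i}z_{(k)}$ on $(0,\infty)^q$ is sound: the Schur--Ostrowski check you outline goes through exactly as you say in all three cases, and continuity of $f_i$ across the ties handles the non-differentiable locus. This shows, in fact, that $\prec$ implies $\prec_p$ on all of $\mathbb{W}_{p,q}$, so Conjecture~\ref{c3} follows formally from Theorem~\ref{t1} with no additional word-combinatorics, whereas the paper only hints at redoing Jenkinson's combinatorial argument.

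One small comment on the alternative $T$-transform route: the ``one small, one large'' case is not quite as trivial as you suggest, because after the transfer the larger of the two new values can drop below the old $i$-th order statistic, pushing a previously-small coordinate out of the bottom $i$; the resulting factor is then $ab/(y_j c)$ rather than $a/y_j$, and one needs the short inequality $ab\ge y_j c$ (which holds since $a+b=y_j+y_k$ and $y_k>c$). This is exactly the ``re-sorting'' bookkeeping you flag at the end, and it does go through, but it deserves the extra line. Your Schur--Ostrowski argument sidesteps this entirely and is the cleaner of the two.
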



\section{Partial sum}

In this section we will prove the counterpart for Theorem \ref{t1}. We will not need the condition that $q$ and $p$ are coprime.
The word $u=0^{q-p}1^p$ is called \emph{the most unbalanced word} in $\mathcal{W}_{p,q}$ and the orbit
of $u$ is called \emph{the most unbalanced orbit} in $\mathbb{W}_{p,q}$.

\begin{example}
If $(p,q)=(3,8)$ then the set of all orbits is $\mathbb{W}_{3,8}=\{00000111,00001011,$ $00001101,00010011,00010101,00011001,00100101\}$.
The base-2 orbits and the partial sums of those orbits are listed in Table \ref{table:tableSum}.
From these partial sums we can see the partial ordering of the set $\mathbb{W}_{3,8}$, which is drawn in Figure \ref{pic1}.
\end{example}

\begin{table}[ht]
\begin{center}
\begin{tabular}{|c|c|c|c|c|c|c|c|c|c|c|c|c|c|}
\hline
\multicolumn{2}{|c|}{00000111} & \multicolumn{2}{|c|}{00001011}  & \multicolumn{2}{|c|}{00001101} & \multicolumn{2}{|c|}{00010011} &
\multicolumn{2}{|c|}{00010101} & \multicolumn{2}{|c|}{00011001} & \multicolumn{2}{|c|}{00100101}\\
\hline
$\mathcal{I}_i$ & $\mathcal{S}_i$ & $\mathcal{I}_i$ & $\mathcal{S}_i$ & $\mathcal{I}_i$ & $\mathcal{S}_i$ & $\mathcal{I}_i$ & $\mathcal{S}_i$ & 
$\mathcal{I}_i$ & $\mathcal{S}_i$ & $\mathcal{I}_i$ & $\mathcal{S}_i$ & $\mathcal{I}_i$ & $\mathcal{S}_i$\\
\hline
7 & 7 & 11 & 11 & 13 & 13 & 19 & 19 & 21 & 21 & 25 & 25 & 37 & 37\\
\hline
14 & 21 & 22 & 33 & 26 & 39 & 38 & 57 & 42 & 63 & 35 & 60 & 41 & 78\\
\hline
28 & 49 & 44 & 77 & 52 & 91 & 49 & 106 & 69 & 132 & 50 & 110 & 73 & 151\\
\hline
56 & 105 & 88 & 165 & 67 & 158 & 76 & 182 & 81 & 213 & 70 & 180 & 74 & 225\\
\hline
112 & 217 & 97 & 262 & 104 & 262 & 98 & 280 & 84 & 297 & 100 & 280 & 82 & 307\\
\hline
131 & 348 & 133 & 395 & 134 & 396 & 137 & 417 & 138 & 435 & 140 & 420 & 146 & 453\\
\hline
193 & 541 & 176 & 571 & 161 & 557 & 152 & 569 & 162 & 597 & 145 & 565 & 148 & 601\\
\hline
224 & 765 & 194 & 765 & 208 & 765 & 196 & 765 & 168 & 765 & 200 & 765 & 164 & 765\\
\hline
\end{tabular}
\caption{The base-2 orbits and the partial sums in $\mathbb{W}_{3,8}$.}
\label{table:tableSum}
\end{center}
\end{table}

\begin{figure}[ht]
\setlength{\unitlength}{0.85mm}
\begin{picture}(140,100)
\put(85,100){$00100101$}
\put(95,97){\vector(-3,-2){25}}
\put(95,97){\vector(3,-2){25}}

\put(55,75){$00011001$}
\put(65,72){\vector(0,-1){40}}
\put(115,75){$00010101$}
\put(125,72){\vector(0,-1){40}}
\put(125,72){\vector(-3,-2){25}}

\put(85,50){$00010011$}
\put(95,47){\vector(-3,-2){25}}

\put(55,25){$00001101$}
\put(65,22){\vector(3,-2){25}}
\put(115,25){$00001011$}
\put(125,22){\vector(-3,-2){25}}

\put(85,0){$00000111$}
\end{picture}
\caption{The partially ordered set $(\mathbb{W}_{3,8},\prec)$.
If $p$ and $q$ grow large, the poset $(\mathbb{W}_{p,q},\prec)$ grows very complex and it is hard to yield any other general results except the two extremal elements.}
\label{pic1}
\end{figure}
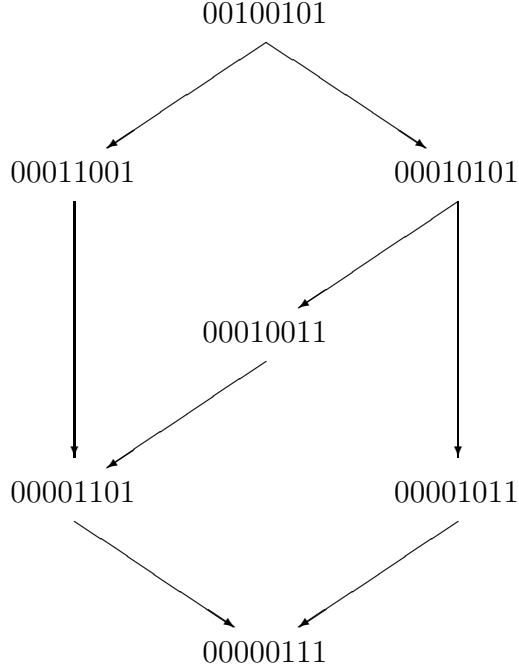

From now on, to make the notation easier, we will suppose that the base-2 expansion $(a_1 a_2 \ldots a_n)_2 = \sum_{i=1}^{n}{a_i 2^{n-i}}$
can contain numbers also different from 0 or 1, i.e. $a_i\in\mathbb{N}$.
If there is a power $a_i^k$ inside a base-2 expansion $(a_1 a_2 \ldots a_n)_2$ then we suppose it means that the number $a_i$ appears $k$ times in a row.
If we have a number with two or more digits then we put parentheses around it.
For example $(0012^3013(14))_2=(0012223013(14))_2=1\cdot2^8+2\cdot2^7+2\cdot2^6+2\cdot2^5+3\cdot2^4+1\cdot2^2+3\cdot2^1+14\cdot 2^0=776.$

We will start with a lemma that states some simple formulas on base-2 numbers, which we will need in the proof of Theorem \ref{t4}.
Notice that for example $(003000)_2=(000600)_2$, $(0040)_2=(0200)_2$, $(010000000)_2=(001111112)_2$ and $(001111111)_2 < (010000000)_2$.
The next lemma uses these kind of facts.

\begin{lemma}\label{l3}
1) $(040^{q-2})_2 = (0160^{q-3})_2 = (01280^{q-4})_2 = \ldots = (0123\ldots (q-3)(q-2)(2q))_2$.

2) $(0123\ldots (q-3)(q-2)(2q))_2 > (0123\ldots (p-2)(p-1) p^{q-2p+1}(p-1)(p-2)\ldots 321)_2,$ if $p\leq q-p$.

3) $(0123\ldots (q-3)(q-2)(2q))_2 > (0123\ldots (q-p-1) (q-p)^{2p-q+1}(q-p-1)\ldots 321)_2,$ if $p > q-p$.

4) $(040^{q-2})_2 = (0320^{q-3})_2 = (03120^{q-4})_2 = (02320^{q-4})_2$.

5) $(02311210^{q-7})_2 = (02312010^{q-7})_2 = (02320010^{q-7})_2 > (02320^{q-4})_2$.

6) $(0220^{q-3})_2 = (0140^{q-3})_2 = (012^{q-3}4)_2 > (012^{q-3}1)_2$.

7) $(02310^{q-4})_2 = (01270^{q-4})_2 = (01234^{q-5}8)_2 > (01234^{q-7}321)_2$.

8) $(022232110^{q-8})_2 = (022240110^{q-8})_2 > (022240^{q-5})_2 = (022400^{q-5})_2 = (02320^{q-4})_2$.

9) $(02222110^{q-7})_2 = (02302110^{q-7})_2 = (02310110^{q-7})_2 > (02310^{q-4})_2 > (01234^{q-7}321)_2$.

10) $(0221210^{q-6})_2 > (022120^{q-5})_2 = (0150^{q-3})_2 = (0123^{q-4}6)_2 > (0123^{q-5}21)_2$.

11) $(021120^{q-5})_2 = (013120^{q-5})_2 = (014000^{q-5})_2 = (012^{q-3}4)_2 > (012^{q-3}1)_2$.
\end{lemma}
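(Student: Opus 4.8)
The plan is to verify each of the eleven identities and inequalities in Lemma \ref{l3} by direct computation, using the two elementary ``carry/borrow'' rules that the preceding paragraph has set up: first, that $a_i\cdot 2^{n-i}=2a_i\cdot 2^{n-i-1}$ (so a digit $a_i$ in position $i$ can be replaced by $2a_i$ in position $i+1$, padding the displaced position with $0$), and second, that $(\ldots 1\,0^k\ldots)_2=(\ldots 0\,1^k\,2\ldots)_2$ applied digitwise (equivalently $2^m=2^{m-1}+\cdots+2^0+1$). Every equality chain in the lemma is an instance of repeatedly applying the first rule to ``spread out'' a large digit; for example in 1), starting from $(040^{q-2})_2=4\cdot 2^{q-2}$, one halves-and-shifts the $4$ step by step, $4\cdot 2^{q-2}=1\cdot 2^{q-1}=\sum_{j=0}^{q-2}2^j+1$, reading off the digit string $0123\cdots(q-3)(q-2)(2q)$ by collecting, at each position, the accumulated ``overflow.'' I would present 1) in full as the template and then for 4), 6), 8), 10), 11) simply indicate the analogous carry steps, since they are mechanical.

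For the strict inequalities (parts 2, 3, 5, 6, 7, 8, 9, 10, 11) the mechanism is always the same: two digit strings of the same length are compared left-to-right (the lexicographic order on digit strings that majorize as numbers, valid because all intermediate digits stay bounded well below $2$ times the next), and at the first position where they differ the left-hand word has the larger digit. Thus for 2) I would compare $0123\cdots(q-3)(q-2)(2q)$ with $0123\cdots(p-2)(p-1)p^{q-2p+1}(p-1)\cdots 21$: the two agree on the prefix $0123\cdots(p-1)$, and in the next position the first has digit $p$ while the second also has digit $p$ (the start of the run $p^{q-2p+1}$)—so I must push one step further; the first string continues $p+1,p+2,\ldots$, strictly increasing, while the second stays flat at $p$ through the run and then decreases, so the first difference favours the left side. (The hypothesis $p\le q-p$ is exactly what guarantees the run $p^{q-2p+1}$ is nonempty and the ``flat then down'' string never overtakes the ``strictly up'' one.) Part 3) is the mirror image with the roles of $p$ and $q-p$ swapped, using $p>q-p$. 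Parts 6), 7), 9), 10), 11) each end with a comparison of the shape $(\ldots 4^k\ldots 4)_2$ versus $(\ldots 4^k\ldots 1)_2$ or similar, where the two strings share a long common prefix and differ only in the last digit, so the inequality is immediate from $4>1$ (resp. the relevant last-digit comparison); I would dispatch these in one line each.

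The one genuinely careful point—and the step I expect to be the main obstacle—is justifying the reduction ``equal-length digit strings with digits in a suitable range are ordered by lexicographic comparison of their digits.'' This is false in general for arbitrary nonnegative integer digit strings (because of carrying), so I would state and use a small auxiliary fact: if $(a_1\cdots a_n)_2$ and $(b_1\cdots b_n)_2$ have $a_k=b_k$ for $k<j$, $a_j>b_j$, and moreover $\sum_{k>j} b_k 2^{n-k} \le \sum_{k>j} 2^{n-k} < 2^{n-j}$ (which holds whenever the trailing $b_k$ are each at most $1$, or more generally small enough), then the first string is strictly larger. In each application the trailing segment of the smaller word is either a run of $2$'s and $1$'s of total length $<q$, or a decreasing tail $(p-1)(p-2)\cdots 1$, in every case summing to less than $2^{(\text{position of the differing digit})}$ by the finite geometric-series bound, so the hypothesis is met. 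With that lemma in hand, all eleven parts collapse to the bookkeeping sketched above. I would organise the write-up as: (i) the carry/borrow rules and the comparison lemma; (ii) part 1) worked in full; (iii) parts 2)–3) worked in full because the run-length hypotheses matter; (iv) parts 4)–11) stated with the one-line carry chain and the one-line final comparison each.
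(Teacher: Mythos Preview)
The paper does not give a proof of Lemma~\ref{l3} at all; it is stated as a list of routine identities and inequalities following directly from the carry rules illustrated just before the lemma (e.g.\ $(003000)_2=(000600)_2$ and $(001111111)_2<(010000000)_2$). Your plan to verify each item by direct computation with those same rules is therefore exactly in the spirit of the paper, and for the equality chains in 1), 4)--11) your description is correct. One small slip: in your sketch of 1) you write $4\cdot 2^{q-2}=1\cdot 2^{q-1}$, but of course $4\cdot 2^{q-2}=2^{q}$; the recursion you want is $c_{k+1}=2(c_k-k)$ with $c_1=4$, giving $c_k=2k+2$ and hence final digit $c_{q-1}=2q$.

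There is, however, a genuine gap in your treatment of the inequalities, specifically the auxiliary ``comparison lemma'' you propose. That lemma requires the tail of the smaller word to satisfy $\sum_{k>j} b_k 2^{n-k}<2^{n-j}$, and you assert this holds in every case by a geometric-series bound. For parts 2) and 3) this is false: after the first differing position the right-hand string still contains a run of digits equal to $p$ (respectively $q-p$), and $p\cdot\sum_{k>j}2^{n-k}$ can far exceed $2^{n-j}$. So the lemma, as you state it, does not cover those two parts.

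The repair is simpler than your proposed machinery. In every inequality of the lemma, once the two digit strings have been brought to the same length via the equality chain, the left-hand digits dominate the right-hand digits \emph{position by position} from the first difference onward (and strictly at least once). For instance in 2), after the common prefix $0,1,\ldots,p$ the left string reads $p+1,p+2,\ldots,q-2,2q$ while the right reads $p,\ldots,p,p-1,\ldots,2,1$; every left entry is strictly larger than the aligned right entry. The same digitwise domination holds in 3) and in the final comparisons of 5)--11). Hence no tail-sum estimate is needed at all: the inequality $\sum a_k 2^{n-k}>\sum b_k 2^{n-k}$ follows immediately from $a_k\ge b_k$ for all $k$ with strict inequality somewhere. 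You already observe this monotonicity for part 2) in passing; make \emph{that} the argument, drop the auxiliary lemma, and the proof goes through cleanly.
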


\begin{table}[ht]
\begin{center}
\begin{tabular}{|c|l|c|l|}
\hline
$i$ & \multicolumn{1}{|c|}{$u_i$} & $> / <$ & \multicolumn{1}{|c|}{$w_i$} \\
\hline
1 & $0^{q-p}1^{p}$ & $<$ & $0^{q-p-1}1w'_1$ \\
\hline
2 & $0^{q-p-1}1^{p}0$ & $<$ & $0^{q-p-2}1w'_2$ \\
\hline
$\ldots$ & \multicolumn{1}{|c|}{$\ldots$} & \multicolumn{1}{|c|}{$\ldots$} &  \multicolumn{1}{|c|}{$\ldots$}  \\
\hline
$q-p-3$ & $00001^{p}0^{q-p-4}$ & $<$ &  $0001w'_{q-p-3}$   \\
\hline
$q-p-2$ & $0001^{p}0^{q-p-3}$ & $<$ &  $001w'_{q-p-2}$ \\
\hline
$q-p-1$ & $001^{p}0^{q-p-2}$ & $<$ &  $01w'_{q-p-1}$ \\
\hline
$q-p$ & $01^{p}0^{q-p-1}$ & $>$ &  $01w'_{q-p}$ \\
\hline
$q-p+1$ & $10^{q-p}1^{p-1}$ & $<$ &  $10w'_{q-p+1}$ \\
\hline
$q-p+2$ & $110^{q-p}1^{p-2}$ & $>$ &  $10w'_{q-p+2}$ \\
\hline
$q-p+3$ & $1110^{q-p}1^{p-3}$ & $>$ &  $110w'_{q-p+3}$ \\
\hline
$\ldots$ & \multicolumn{1}{|c|}{$\ldots$} & \multicolumn{1}{|c|}{$\ldots$} & \multicolumn{1}{|c|}{$\ldots$} \\
\hline
$q-1$ & $1^{p-1}0^{q-p}1$ & $>$ &  $1^{p-2}0w'_{q-1}$ \\
\hline
$q$ & $1^{p}0^{q-p}$ & $>$ &  $1^{p-1}0w'_{q}$ \\
\hline
\end{tabular}
\caption{Orbits of $u$ and $w$ from Theorem \ref{t4}.}
\label{table:table3}
\end{center}
\end{table}

\begin{theorem}\label{t4}
For any integers $1\leq p < q$, the most unbalanced orbit $u=0^{q-p}1^p\in\mathbb{W}_{p,q}$
is the greatest element in $(\mathbb{W}_{p,q},\prec)$. In other words, for any $w\in\mathbb{W}_{p,q}$,
$$ \mathcal{S}_i(u) \leq \mathcal{S}_i(w)\quad {\rm for\ all}\ 1\leq i\leq q.$$
\end{theorem}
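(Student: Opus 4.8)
The plan is to compare, shift by shift, the sorted base-2 orbit of $u=0^{q-p}1^p$ against the sorted base-2 orbit of an arbitrary competitor $w\in\mathbb{W}_{p,q}$, and to show that the $i$-th partial sum $\mathcal{S}_i(u)$ never exceeds $\mathcal{S}_i(w)$. First I would write down the orbit of $u$ explicitly: its cyclic shifts are exactly the words $0^{q-p-j}1^p0^{j}$ for $0\le j\le q-p$ together with the words $1^{k}0^{q-p}1^{p-k}$ for $1\le k\le p-1$, and these are already essentially in lexicographic order (the first block, indexed by $j$ running from $q-p$ down to $1$, gives the words starting with the most zeros; then the words starting with a single $1$; and so on), as displayed in the left column of Table~\ref{table:table3}. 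In parallel, the lexicographically smallest element of the orbit of any $w\in\mathbb{W}_{p,q}\setminus\{u\}$ must begin with at least $q-p-1$ zeros followed by a $1$ (it cannot begin with $q-p$ zeros, as the only such word in $\mathcal{W}_{p,q}$ is $u$ itself), and more generally the $i$-th shift of $w$ — call its bits $w'_i$ after a suitable prefix — can be matched against the $i$-th shift of $u$ with a controlled prefix, exactly as recorded in the middle and right columns of Table~\ref{table:table3}.

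The key mechanism is that the two orbits agree in their total sum, $\mathcal{S}_q(u)=\mathcal{S}_q(w)=(2^q-1)p$, so it suffices to understand where the running difference $\mathcal{S}_i(w)-\mathcal{S}_i(u)$ dips lowest. From Table~\ref{table:table3} one reads off that for $1\le i\le q-p-1$ each term of $u$'s orbit is strictly smaller than the corresponding term of $w$'s orbit (because $u$'s word has its $1$-block pushed maximally to the right among words with that many leading zeros), so $\mathcal{S}_i(w)-\mathcal{S}_i(u)$ is strictly increasing on this range and in particular positive. At $i=q-p$ the inequality flips (the word $01^p0^{q-p-1}$ of $u$ is the largest among words starting with exactly one leading $1$-less-than-$q-p$ zeros, hence may exceed its counterpart), and similarly at several later indices $u$'s term can be the larger one; so the running difference can decrease. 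The heart of the argument is to bound, at each such "bad" index $i$, the amount by which $\mathcal{O}_i(u)$ exceeds $\mathcal{O}_i(w)$ by the surplus $\mathcal{S}_{i-1}(w)-\mathcal{S}_{i-1}(u)$ already accumulated, i.e. to show the running difference never goes negative. This is precisely where the delicate base-2 manipulations of Lemma~\ref{l3} come in: identities such as part~1) (which repackages a single large digit $2q$ spread over a long carry chain $0123\ldots(q-2)(2q)$) and the comparisons in parts~2)–3) and 6)–11) are exactly the inequalities one needs to certify that the worst-case competitor's partial deficits dominate $u$'s partial excesses. I would organize the proof by splitting on the position of the first $1$ in $\mathcal{O}_1(w)$ (the cases $w'$ starting $0^{q-p-1}1\cdots$ versus $0^{q-p-2}1\cdots$, etc.), and within each case invoke the appropriate line of Lemma~\ref{l3}; the two sub-cases $p\le q-p$ and $p>q-p$ are handled by parts~2) and~3) respectively.

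The main obstacle, as usual in these majorization arguments, is the bookkeeping at the transitional indices $i=q-p$ and $i=q-p+1,\ldots$, where the orbit of $u$ switches from "words of the form $0^{*}1^{p}0^{*}$" to "words of the form $1^{*}0^{q-p}1^{*}$" and the sign of $\mathcal{O}_i(u)-\mathcal{O}_i(w)$ is no longer uniform. Concretely, one must check that the single large jump $\mathcal{O}_{q-p}(u)=(01^p0^{q-p-1})_2\approx 2^{q-1}$ cannot overshoot $\mathcal{S}_{q-p-1}(w)-\mathcal{S}_{q-p-1}(u)+\mathcal{O}_{q-p}(w)$; this reduces, via Lemma~\ref{l3}, to the numerical inequality that the accumulated surplus from the first $q-p-1$ shifts of $u$ versus $w$ is at least $(0\,123\cdots(q-p-1)(q-p)^{\ldots}\cdots 1)_2$ minus what $w$ contributes, and the Lemma is designed precisely so that its right-hand sides are these threshold quantities. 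Once the transition is cleared, the remaining indices $i>q-p$ are comparatively routine because from there on $u$'s terms grow fastest and $w$ never catches back up past the total, so the partial-sum inequality propagates to $i=q$ where it becomes equality. I would conclude by remarking that coprimality of $p$ and $q$ is never used, consistent with the statement of Theorem~\ref{t4}.
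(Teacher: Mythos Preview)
Your high-level skeleton is exactly the paper's: set up Table~\ref{table:table3}, read off $(u_i)_2<(w_i)_2$ for $1\le i\le q-p-1$ and $i=q-p+1$ while $(u_i)_2>(w_i)_2$ for $i=q-p$ and $i\ge q-p+2$, use the equality $\mathcal{S}_q(u)=\mathcal{S}_q(w)$ to handle the tail, and isolate $\mathcal{S}_{q-p}(u)\le\mathcal{S}_{q-p}(w)$ as the single nontrivial inequality to be checked via Lemma~\ref{l3}. One small correction on the tail: the paper's argument for $q-p+2\le i\le q$ is \emph{backward} from $\mathcal{S}_q(u)=\mathcal{S}_q(w)$ (since $(u_i)_2>(w_i)_2$ there, the running difference $\mathcal{S}_i(w)-\mathcal{S}_i(u)$ is strictly decreasing and ends at $0$, hence was positive before); your forward-sounding ``propagates to $i=q$'' would not by itself give this.

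Where your plan genuinely diverges is the case split for the crux inequality $\mathcal{S}_{q-p}(u)\le\mathcal{S}_{q-p}(w)$. You propose splitting on the position of the first $1$ in $\mathcal{O}_1(w)$, i.e.\ on the length of the longest cyclic $0$-run. The paper splits instead on $n$, the number of blocks in $w=0^{r_1}1^{s_1}\cdots 0^{r_n}1^{s_n}$, because $n$ is exactly the number of cyclic shifts of $w$ beginning with $01$; thus $n\ge 4$ immediately forces $w_{q-p},\ldots,w_{q-p-3}$ all to have prefix $01$, giving $\mathcal{S}_{q-p}(w)\ge(040^{q-2})_2$, and Lemma~\ref{l3} parts 1--3 finish that case in one line. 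The cases $n=3$ and $n=2$ then fan out into roughly fifteen subcases indexed by which $r_i,s_i$ equal $1$ (Tables~\ref{table:table4}--\ref{table:table5}), each pinning down enough prefix bits of $w_{q-p},w_{q-p-1},\ldots$ to invoke one of the tailored inequalities 4--11 of Lemma~\ref{l3}. Your proposed split by longest $0$-run controls the prefixes of the \emph{smallest} shifts $w_1,w_2,\ldots$ rather than the largest ones $w_{q-p},w_{q-p-1},\ldots$ that actually dominate $\mathcal{S}_{q-p}(w)$, so it is not clear how you would feed it into Lemma~\ref{l3} as stated; at minimum you would need to rewrite the lemma around your decomposition, and your plan does not indicate how.
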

\begin{proof}
We mark $w=0^{r_1}1^{s_1}0^{r_2}1^{s_2}\ldots 0^{r_n}1^{s_n}$, where $\sum_{i=1}^{n}{s_i}=p$, $\sum_{i=1}^{n}{r_i}=q-p$, $n\geq2$ and $\forall i: r_i,s_i > 0$.
The orbits of $w$ and $u$ are marked with $(w_1,\ldots,w_q)$ and $(u_1,\ldots,u_q)$.

We get Table \ref{table:table3} by writing the orbits of $u$ and $w$ in (lexicographic) order.
There are $p$ number of ones and $n\geq2$ so words $w_{q-p+1}$ and $w_{q-p+2}$ start with $10$ (the rest of the word is marked with $w_i'$).
For the same reasons words $w_{q-p}$ and $w_{q-p-1}$ start with $01$.

For words from $w_1$ to $w_{q-p-2}$ we get that $w_i$ cannot be smaller than a word which we get by increasing the number of zeros in front of the word by one, starting from $w_{q-p-1}$.
This is because the number of zeros in front of the word cannot increase with more than one, when moving one word upwards, and we clearly get a larger word if it does not increase.
Similarly, for words from $w_{q-p+3}$ to $w_q$ we get that $w_i$ cannot be larger than a word which we get by increasing the number of ones in front of the word by one, starting from $w_{q-p+2}$.
We will suppose that all these words $w_i$ start as described.

Now we see that $(u_i)_2 < (w_i)_2$ for $1\leq i \leq q-p-1, i=q-p+1$, because we estimated the words from $w_1$ to $w_{q-p-2}$ to be the smallest possible.
Similarly $(u_i)_2 > (w_i)_2$ for $q-p+2\leq i \leq q, i=q-p$, because we estimated the words from $w_{q-p+3}$ to $w_q$ to be the largest possible.

Now we get that $\mathcal{S}_i(u) \leq \mathcal{S}_i(w)$ for $1\leq i\leq q-p-1$.
If we suppose that $\mathcal{S}_{q-p}(u) \leq \mathcal{S}_{q-p}(w)$ then we clearly get that also $\mathcal{S}_{q-p+1}(u) \leq \mathcal{S}_{q-p+1}(w)$, since $(u_{q-p+1})_2 < (w_{q-p+1})_2$.
We already deduced in the preliminaries that $\mathcal{S}_q(u) = \mathcal{S}_q(w) = (2^{q}-1)p$.
Because $(u_i)_2 > (w_i)_2$ for $q-p+2\leq i \leq q$, we get that $\mathcal{S}_i(u) \leq \mathcal{S}_i(w)$ for $q-p+2\leq i\leq q$.

The only thing we need to prove anymore is our assumption $\mathcal{S}_{q-p}(u) \leq \mathcal{S}_{q-p}(w)$ in the previous paragraph.
Direct calculation gives:
$$ \mathcal{S}_{q-p}(u) = \sum^{q-p}_{k=1}{\mathcal{I}_k(u)} = (0123\ldots (p-2)(p-1) p^{q-2p+1}(p-1)(p-2)\ldots 321)_2\ \textrm{if}\ p\leq q-p, $$
$$ \mathcal{S}_{q-p}(u) = \sum^{q-p}_{k=1}{\mathcal{I}_k(u)} = (0123\ldots (q-p-1) (q-p)^{2p-q+1}(q-p-1)\ldots 321)_2\ \textrm{if}\ p > q-p. $$
Now we divide the proof into three cases: 1) $n\geq 4$, 2) $n=3$ and 3) $n=2$.

\begin{table}[ht]
\begin{center}
\begin{tabular}{|l|l|l|l|l|l|l|l|l|}
\hline
\multicolumn{1}{|c|}{$i$} & \multicolumn{8}{|c|}{Prefixes of $w_i$} \\ \cline{2-9}
& \multicolumn{1}{|c|}{1} & \multicolumn{1}{|c|}{2.1} & \multicolumn{1}{|c|}{2.2} & \multicolumn{1}{|c|}{2.3} & \multicolumn{1}{|c|}{2.4} & \multicolumn{1}{|c|}{3.1}
& \multicolumn{1}{|c|}{3.2} & \multicolumn{1}{|c|}{3.3} \\
\hline
$q-p-4$ & & 001 & & & & & &\\
\hline
$q-p-3$ & 01 & 001 & 001 & & & 0011 & & \\
\hline
$q-p-2$ & 01 & 01 & 01 & 0101 & 010101 & 0011 & 0011011 & \\
\hline
$q-p-1$ & 01 & 01 & 0101 & 0101 & 010101 & 011 & 011 & 011 \\
\hline
$q-p$ & 01 & 01 & 0101 & 011 & 010101 & 011 & 011011 & 011 \\
\hline
\end{tabular}
\caption{Prefixes of $w_i$ from cases 1-3.3.}
\label{table:table4}
\end{center}
\end{table}

1) Because $n\geq 4$ the words $w_{q-p},w_{q-p-1},w_{q-p-2}$ and $w_{q-p-3}$ start with $01$.
It is enough to take only these four words to the partial sum $\mathcal{S}_{q-p}(w)$ and even suppose that the remaining parts of these four words are zeros.
We get that $$\mathcal{S}_{q-p}(w) \geq \sum^{q-p}_{k=q-p-3}{\mathcal{I}_k(w)} \geq (040^{q-2})_2 = (0123\ldots (q-2)(2q))_2 > \mathcal{S}_{q-p}(u),$$
where the equality comes from Lemma \ref{l3}, 1 and the last inequality from Lemma \ref{l3}, 2\&3.

2) The case $n=3$ is similar to the previous one. We divide it into four subcases depending on the values of $r_i$ and $s_i$.
Because $n=3$ the words $w_{q-p}, w_{q-p-1}$ and $w_{q-p-2}$ start with 01 in all cases.
From now on, we will use Lemma \ref{l3} without stating it explicitly.

2.1) $\exists i,j$ ($i\neq j$): $r_i,r_j\geq2$.
This means there are at least two blocks of zeros of length at least 2, which means that the words $w_{q-p-3}$ and $w_{q-p-4}$ starts with 001.
Now it is enough to take only five words to the partial sum $\mathcal{S}_{q-p}(w)$ and suppose that the remaining parts are zeros.
We get that $$\mathcal{S}_{q-p}(w) \geq \sum^{q-p}_{k=q-p-4}{\mathcal{I}_k(w)} \geq (0320^{q-3})_2 > \mathcal{S}_{q-p}(u).$$

2.2) $\exists! i: r_i\geq2$.
Because we have two blocks of zeros of length 1, we get that the words $w_{q-p},w_{q-p-1}$ start with $011,011$ or $011,0101$ or $0101,0101$ (depending on the values of $s_i$).
We can estimate the partial sum $\mathcal{S}_{q-p}(w)$ downwards so we suppose they start with $0101,0101$.
Since $r_i\geq2$, the word $w_{q-p-3}$ starts with $001$.
We get that $$\mathcal{S}_{q-p}(w) \geq \sum^{q-p}_{k=q-p-3}{\mathcal{I}_k(w)} \geq (03120^{q-4})_2 > \mathcal{S}_{q-p}(u).$$

2.3) $r_1,r_2,r_3=1$, $\exists i:$ $s_i\geq2$. Notice that $q-p=r_1+r_2+r_3=3$.
Because there is at least one block of ones of length at least 2 and $r_1,r_2,r_3=1$, we get that the words $w_{q-p},w_{q-p-1},w_{q-p-2}$ start with $011,011,011$ or $011,011,0101$ or $011,0101,0101$.
From these, we again choose the smallest ones $011,0101,0101$ in order to estimate $\mathcal{S}_{q-p}(w)$ downwards.
We get that $$\mathcal{S}_{q-p}(w) = \sum^{3}_{1}{\mathcal{I}_k(w)} \geq (03120^{q-4})_2 > \mathcal{S}_{q-p}(u).$$

2.4) $\forall i: r_i,s_i=1$, i.e. $u=000111$ and $w=010101$. Trivially we get that
$$\mathcal{S}_{q-p}(w) = (030303)_2 > (012321)_2 = \mathcal{S}_{q-p}(u).$$

3) The case $n=2$ is similarly divided into several subcases depending on the values of $r_i$ and $s_i$.
Because $n=2$ the words $w_{q-p}$ and $w_{q-p-1}$ start with 01 in all cases.

3.1) $s_1,s_2\geq2$ and $r_1,r_2\geq2$.
Because $s_1,s_2\geq2$ the words $w_{q-p}$ and $w_{q-p-1}$ start with $011$, and because $r_1,r_2\geq2$ the words $w_{q-p-2}$ and $w_{q-p-3}$ start with $0011$.
We get that $$\mathcal{S}_{q-p}(w) \geq \sum^{q-p}_{k=q-p-3}{\mathcal{I}_k(w)} \geq (02420^{q-4})_2 > (02320^{q-4})_2 > \mathcal{S}_{q-p}(u).$$

3.2) $s_1,s_2\geq2$ and $r_1=1,r_2\geq2$.
Because $r_1=1$ and $s_1,s_2\geq2$ the word $w_{q-p}$ starts with $0111$ or $011011$ and because $r_2\geq2$ the word $w_{q-p-2}$ starts with $00111$ or $0011011$.
From these we choose the smaller ones $011011$ and $0011011$. The word $w_{q-p-1}$ starts with $011$, since $s_1,s_2\geq2$.
We get that $$\mathcal{S}_{q-p}(w) \geq \sum^{q-p}_{k=q-p-2}{\mathcal{I}_k(w)} \geq (02311210^{q-7})_2 > (02320^{q-4})_2 > \mathcal{S}_{q-p}(u).$$

3.3) $s_1,s_2\geq2$ and $r_1,r_2=1$. Notice that $q-p=r_1+r_2=2$.
Because $s_1,s_2\geq2$ the words $w_1$ and $w_2$ start with $011$.
We get that $$\mathcal{S}_{q-p}(w) = \sum^{2}_{k=1}{\mathcal{I}_k(w)} \geq (0220^{q-3})_2 > (012^{q-3}1)_2 = \mathcal{S}_{q-p}(u).$$

\begin{table}[ht]
\begin{center}
\begin{tabular}{|l|l|l|l|l|l|l|l|l|l|}
\hline
\multicolumn{1}{|c|}{$i$} & \multicolumn{9}{|c|}{Prefixes of $w_i$} \\ \cline{2-10}
& \multicolumn{1}{|c|}{3.4.1-2} & \multicolumn{1}{|c|}{3.5.1-2} & \multicolumn{1}{|c|}{3.5.3} & \multicolumn{1}{|c|}{3.6.1-3} & \multicolumn{1}{|c|}{3.7} & \multicolumn{1}{|c|}{3.8} & \multicolumn{1}{|c|}{3.9} & \multicolumn{1}{|c|}{3.10} & \multicolumn{1}{|c|}{3.11} \\
\hline
$q-p-4$ & 0001 & 00001101 & & 00001011 & & & & & \\
\hline
$q-p-3$ & 001 & 0001101 & & 0001011 & & 001 & 0001 & & \\
\hline
$q-p-2$ & 0011 & 001101 & 001101 & 001011 & & 001 & 001 & 00101 & \\
\hline
$q-p-1$ & 01 & 01 & 01001 & 01011 & 01011 & 01 & 01 & 01001 & 0101 \\
\hline
$q-p$ & 011 & 01101 & 01101 & 011 & 01101 & 01 & 0101 & 01010 & 0101 \\
\hline
\end{tabular}
\caption{Prefixes of $w_i$ from cases 3.4-3.11.}
\label{table:table5}
\end{center}
\end{table}

3.4.1) $s_1=1,s_2\geq2$ and $r_1\geq3,r_2\geq2$ or $r_1\geq2,r_2\geq3$.
We can easily see that the words $w_{q-p},w_{q-p-1},w_{q-p-2},w_{q-p-3},w_{q-p-4}$ start with $01,01,001,001,0001$.
Because $s_2\geq2$ we additionally get that from these words $w_{q-p}$ and $w_{q-p-2}$ start with $011$ and $0011$.
Together we have $$\mathcal{S}_{q-p}(w) \geq \sum^{q-p}_{k=q-p-4}{\mathcal{I}_k(w)} \geq (02320^{q-4})_2 > \mathcal{S}_{q-p}(u).$$

3.4.2) $s_1=1,s_2\geq2$ and $r_1,r_2=2$. Notice that $q-p=r_1+r_2=4$.
This is similar to the previous case except we do not have the word $w_{q-p-4}$.
We get $$\mathcal{S}_{q-p}(w) = \sum^{4}_{k=1}{\mathcal{I}_k(w)} \geq (02310^{q-4})_2 > (01234^{q-7}321)_2 = \mathcal{S}_{q-p}(u).$$

3.5.1) $s_1=1,s_2\geq2$ and $r_1=1,r_2\geq4$.
Because $r_1=1$ and $s_2\geq2$ the word $w_{q-p}$ starts with $0111$ or $01101$. We choose the smaller one $01101$.
Since $r_2\geq4$, the same applies to the words $w_{q-p-2},w_{q-p-3}$ and $w_{q-p-4}$, which are estimated to start with $001101,0001101$ and $00001101$.
We get $$\mathcal{S}_{q-p}(w) \geq \sum^{q-p}_{k=q-p-4}{\mathcal{I}_k(w)} \geq (022232110^{q-8})_2 > (02320^{q-4})_2 > \mathcal{S}_{q-p}(u).$$

3.5.2) $s_1=1,s_2\geq2$ and $r_1=1,r_2=3$. Notice that $q-p=r_1+r_2=4$.
This is similar to the previous case except we do not have the word $w_{q-p-4}$.
We get $$\mathcal{S}_{q-p}(w) = \sum^{4}_{k=1}{\mathcal{I}_k(w)} \geq (02222110^{q-7})_2 > (01234^{q-7}321)_2 = \mathcal{S}_{q-p}(u).$$

3.5.3) $s_1=1,s_2\geq2$ and $r_1=1,r_2=2$. Notice that $q-p=r_1+r_2=3$.
This is similar to the previous case except we do not have the word $w_{q-p-3}$ and we know that the word $w_{q-p-1}$ starts with $01001$.
We get $$\mathcal{S}_{q-p}(w) = \sum^{3}_{k=1}{\mathcal{I}_k(w)} \geq (0221210^{q-6})_2 > (0123^{q-5}21)_2 = \mathcal{S}_{q-p}(u).$$

3.6.1) $s_1=1,s_2\geq2$ and $r_1\geq4,r_2=1$.
Because $s_2\geq2$ the word $w_{q-p}$ starts with $011$. Because $s_1,r_2=1$ and $r_1\geq4$ we know that the words $w_{q-p-1},w_{q-p-2},w_{q-p-3}$ and $w_{q-p-4}$ start with $01011,001011,0001011$ and $00001011$.
We get $$\mathcal{S}_{q-p}(w) \geq \sum^{q-p}_{k=q-p-4}{\mathcal{I}_k(w)} \geq (022232210^{q-8})_2 > (02320^{q-4})_2 > \mathcal{S}_{q-p}(u).$$

3.6.2) $s_1=1,s_2\geq2$ and $r_1=3,r_2=1$. Notice that $q-p=r_1+r_2=4$.
This is similar to the previous case except we do not have the word $w_{q-p-4}$.
We get $$\mathcal{S}_{q-p}(w) = \sum^{4}_{k=1}{\mathcal{I}_k(w)} \geq (02222210^{q-7})_2 > (01234^{q-7}321)_2 = \mathcal{S}_{q-p}(u).$$

3.6.3) $s_1=1,s_2\geq2$ and $r_1=2,r_2=1$. Notice that $q-p=r_1+r_2=3$.
This is similar to the previous case except we do not have the word $w_{q-p-3}$.
We get $$\mathcal{S}_{q-p}(w) = \sum^{3}_{k=1}{\mathcal{I}_k(w)} \geq (0221210^{q-6})_2 > (0123^{q-5}21)_2 = \mathcal{S}_{q-p}(u).$$

3.7) $s_1=1,s_2\geq2$ and $r_1,r_2=1$. Notice that $q-p=r_1+r_2=2$.
Because $s_2\geq2$ and $r_1=1$ the word $w_2$ starts with $0111$ or $01101$ from which we choose the smaller one $01101$. The word $w_1$ starts with $01011$.
We get $$\mathcal{S}_{q-p}(w) = \sum^{2}_{k=1}{\mathcal{I}_k(w)} \geq (021120^{q-5})_2 > (012^{q-3}1)_2 = \mathcal{S}_{q-p}(u).$$

3.8) $s_1,s_2=1$ and $r_1,r_2\geq2$. Notice that $p=s_1+s_2=2$.
Because $r_1,r_2\geq2$ the words $w_{q-p-2}$ and $w_{q-p-3}$ start with $001$.
We get $$\mathcal{S}_{q-p}(w) \geq \sum^{q-p}_{k=q-p-3}{\mathcal{I}_k(w)} \geq (0220^{q-3})_2 > (012^{q-3}1)_2 = \mathcal{S}_{q-p}(u).$$

3.9) $s_1,s_2=1$ and $r_1=1,r_2\geq3$.
Because $s_1,r_2=1$ the word $w_{q-p}$ starts with $0101$ and because $r_2\geq3$ the words $w_{q-p-1},w_{q-p-2}$ and $w_{q-p-3}$ start with $01,001$ and $0001$.
We get $$\mathcal{S}_{q-p}(w) \geq \sum^{q-p}_{k=q-p-3}{\mathcal{I}_k(w)} \geq (02120^{q-4})_2 = (0220^{q-3})_2 > (012^{q-3}1)_2 = \mathcal{S}_{q-p}(u).$$

3.10) $s_1,s_2=1$ and $r_1=1,r_2=2$. Now $u=00011$ and $w=01001$.
We get $$\mathcal{S}_{q-p}(w) = (02112)_2 > (01221)_2 = \mathcal{S}_{q-p}(u).$$

3.11) $s_1,s_2=1$ and $r_1,r_2=1$. Now $u=0011$ and $w=0101$.
We get $$\mathcal{S}_{q-p}(w) = (0202)_2 > (0121)_2 = \mathcal{S}_{q-p}(u).$$
\end{proof}


\section{Product}

In this section we prove the counterpart for Theorem \ref{t2}. We will not need the condition that $q$ and $p$ are coprime.
The following two lemmas state some simple inequalities on base-2 numbers that we need in the proof of Theorem \ref{t5}.
We will suppose that the base-2 expansion $(a_1 a_2 \ldots a_n)_2 = \sum_{i=1}^{n}{a_i 2^{n-i}}$ can contain also rational numbers, i.e. $a_i\in\mathbb{Q}$.
For example $(\frac{1}{2}00\frac{3}{2}0)_2 = (010\frac{3}{2}0)_2 = (01011)_2$ and
$\frac{2}{3}\cdot(0110)_2 = (0\frac{2}{3}\frac{2}{3}0)_2 = (0\frac{6}{3}00)_2 = (1000)_2$.

\begin{lemma}\label{l4}
The following inequalities hold for any $w\in\{0,1\}^*$ and $a,b\geq0$ $(a+b\geq1)$
such that the words on both sides are equally long and have equally many zeros and ones.

1) $2\cdot (0^{b_1} 1^a 0^{b_2})_2 < (0^{b_1-2}1w)_2$, where $b_1\geq 3$ and $b_2\geq 0$.

2) $4\cdot(000001^a 0^b)_2 < (001w)_2$.

3) $\frac{21}{8}\cdot(00001^a 0^b)_2 < (0010101w)_2$.

4) $(0^{b_1} 1^a 0^{b_2})_2 < (0^{b_1-1} 1w)_2$, where $b_1\geq 2$ and $b_2\geq 0$.

5) $\frac{1}{2}\cdot(01^a 0^b)_2 < (01w)_2$.

6) $(10^b 1^a)_2 \leq (1w)_2$.

7) $\frac{2}{3}\cdot(110^b 1^a)_2 < (1w)_2$.

8) $\frac{4}{7}\cdot(1110^b 1^a)_2 < (1w)_2$.

9) $\frac{1}{2}\cdot(1^{a_1}0^b 1^{a_2})_2 < (1w)_2$, where $a_1\geq 1$ and $a_2\geq 0$.
\end{lemma}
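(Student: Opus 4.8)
The plan is to verify each of the nine inequalities by a short, direct computation. In every case the left-hand side is a fixed rational multiple of a number of the form $(0^{c_1}1^a0^{c_2})_2=2^{c_2}(2^a-1)$ (parts 1--8), or of $(1^{a_1}0^b1^{a_2})_2=2^{a_2+b}(2^{a_1}-1)+2^{a_2}-1$ (part 9); hence it has the strict upper bound obtained by replacing the factor $2^a-1$ by $2^a$ (resp.\ $2^{a_1}-1$ by $2^{a_1}$). The coefficients $2,4,\tfrac{21}{8},1,\tfrac12,1,\tfrac23,\tfrac47,\tfrac12$ are precisely those for which this upper bound collapses to a transparent sum of powers of two, and all the slack in each inequality will come from that single replacement $2^a-1\mapsto 2^a$.

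Concretely, for parts 1--5 the part of the right-hand word that precedes $w$ consists of some zeros followed by one or more $1$'s, so I would bound the right-hand side below simply by the contribution of those explicitly written $1$'s, discarding $w\geq 0$; the length and weight of $w$ play no role. For example, in part 2, with $n=5+a+b$ the common length, the left side is $4\cdot 2^b(2^a-1)=2^{b+2}(2^a-1)<2^{a+b+2}=2^{n-3}\leq(001w)_2$; in part 3 the three $1$'s of the prefix $0010101$ contribute $2^{a+b+1}+2^{a+b-1}+2^{a+b-3}=21\cdot2^{a+b-3}>\tfrac{21}{8}\cdot2^{a+b}>\tfrac{21}{8}\cdot2^b(2^a-1)$; and parts 1, 4 and 5 are the same computation with a single $1$ in the prefix.

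For parts 6--9 the explicit prefix is just a leading $1$, so discarding $w$ no longer suffices. Here I would use the elementary fact that, among all binary words of a given length with a prescribed number of ones, the numerically smallest is the one that lists all its zeros first, of value $2^m-1$ where $m$ is the number of ones. The equal-length and equal-weight hypotheses pin down the numbers of zeros and ones of $w$, so $(1w)_2\geq(10^{\beta}1^{\alpha})_2$ for the appropriate $\alpha$ and $\beta$. In part 6 this lower bound equals the left side exactly (equality when $w=0^\beta1^\alpha$), which accounts for the non-strict $\leq$ there. In parts 7--9 one is left with an inequality such as $\tfrac23\big(3\cdot2^{a+b}+2^a-1\big)<2\cdot2^{a+b}+2^{a+1}-1$, i.e.\ $2^{a+1}-2<3\big(2^{a+1}-1\big)$, which is immediate; parts 8 and 9 are analogous, part 9 needing only the harmless remark that its left side is already below $2^{n-1}$ when $a_2=0$.

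I do not expect a genuine obstacle; the difficulty is purely bookkeeping. The two points that require attention are (i) reading off correctly, from the equal-length and equal-weight hypotheses, the exact numbers of zeros and ones that $w$ must contain, so that the closed forms above are right, and (ii) the few boundary situations ($a=0$, or $a_2=0$ in part 9, or $a+b=1$) in which an exponent such as $2^{a-1}$ degenerates to $\tfrac12$ and must be checked by hand. Finally, it is worth recording that these coefficients cannot be improved: for large $a$ the stated inequalities become asymptotically tight, so no rounder constants would do.
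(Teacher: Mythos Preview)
Your proposal is correct and follows essentially the same approach as the paper: in each item one bounds the left-hand side above via the single replacement $2^a-1\mapsto 2^a$ (equivalently $(1^a)_2<(10^a)_2$) and the right-hand side below by the contribution of its explicit prefix (parts 1--5) or by the minimum-value word $10^{\beta}1^{\alpha}$ of the correct length and weight (parts 6--9). The only stylistic difference is that the paper carries out these same estimates inside its fractional-digit base-$2$ notation (e.g.\ writing $\tfrac{2}{3}\cdot(110^b1^a)_2=(100^b(\tfrac{2}{3})^a)_2$), whereas you use the closed forms $2^{c_2}(2^a-1)$ directly; the underlying inequalities are identical. One tiny slip: in your part~3 sketch the first ``$>$'' should be ``$=$'' since $21\cdot 2^{a+b-3}=\tfrac{21}{8}\cdot 2^{a+b}$, but the strict inequality you need still follows from the final step.
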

\begin{proof}
1) $2\cdot (0^{b_1} 1^a 0^{b_2})_2 = (0^{b_1-1} 1^a 0^{b_2+1})_2 < (0^{b_1-2}10^{a+b_2+1})_2 < (0^{b_1-2}1w)_2$.

2) $4\cdot(000001^a 0^b)_2 = 2\cdot(00001^a 0^{b+1})_2 = (0001^a 0^{b+2})_2 < (0010^{a+b+2})_2 < (001w)_2$.

3) $\frac{21}{8}\cdot(00001^a 0^b)_2 < \frac{21}{8}\cdot(0001 0^{a+b})_2  = (000\frac{21}{8} 0^{a+b})_2 = (0010101 0^{a+b-3})_2 < (0010101w)_2$

4) $(0^{b_1} 1^a 0^{b_2})_2 < (0^{b_1-1} 1 0^{a+b_2})_2 < (0^{b_1-1} 1w)_2$.

5) $\frac{1}{2}\cdot(01^a 0^b)_2 < \frac{1}{2}\cdot(1 0^{a+b})_2 = (01 0^{a+b-1})_2 < (01w)_2$.

6) Trivial.

7) $\frac{2}{3}\cdot(110^b 1^a)_2 = (\frac{2}{3}\frac{2}{3}0^b(\frac{2}{3})^a)_2 = (100^b(\frac{2}{3})^a)_2 < (1w)_2$.

8) $\frac{4}{7}\cdot(1110^b 1^a)_2 = (\frac{4}{7}\frac{4}{7}\frac{4}{7}0^b(\frac{4}{7})^a)_2 =
(\frac{4}{7}\frac{6}{7}00^b(\frac{4}{7})^a)_2 = (1000^b(\frac{4}{7})^a)_2 < (1w)_2$.

9) $\frac{1}{2}\cdot(1^{a_1}0^b 1^{a_2})_2 < \frac{1}{2}\cdot(1^{a_1+b+a_2})_2 < \frac{1}{2}\cdot(20^{a_1+b+a_2-1})_2 = (10^{a_1+b+a_2-1})_2 < (1w)_2$.
\end{proof}

\begin{lemma}\label{l5}
The following inequalities hold for any $w\in\{0,1\}^*$ and $a,b\geq0$ $(a+b\geq1)$
such that the words on both sides are equally long and have equally many zeros and ones.

1) $\frac{3}{2}\cdot(0^{b_1+1} 1^a 0^{b_2})_2 < (0^{b_1}11w)_2$, where $b_1\geq 1$ and $b_2\geq 0$.

2) $\frac{11}{8}\cdot(0^{b_1+1} 1^a 0^{b_2})_2 < (0^{b_1}1011w)_2$, where $b_1\geq 1$ and $b_2\geq 0$.

3) $\frac{8}{3}\cdot(000011 0^b)_2 < (001w)_2$.

4) $\frac{13}{8}\cdot(0001^a 0^b)_2 < (001101w)_2$.

5) $\frac{5}{3}\cdot(00011 0^b)_2 < (00101w)_2$.

6) $\frac{3}{4}\cdot(01^a 0^b)_2 < (011w)_2$.

7) $\frac{2}{3}\cdot(0^{b_1} 11 0^{b_2} 1^{a})_2 < (0^{b_1}1w)_2$, where $a,b_1,b_2\geq 0$.

8) $\frac{3}{4}\cdot(1^{a_1} 0^{b} 1^{a_2})_2 < (11w)_2$, where $a_1,b\geq 1$ and $a_2\geq 0$.

9) $\frac{5}{6}\cdot(0^{b_1} 11 0^{b_2} 1^{a})_2 < (0^{b_1}101w)_2$, where $a,b_1\geq 0$ and $b_2\geq 1$.
\end{lemma}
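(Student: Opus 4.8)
The plan is to prove all nine inequalities by the same short computation used in the proof of Lemma~\ref{l4}, so in the sketch I describe the common mechanism rather than carrying out the nine verifications separately. Each inequality has the shape $c\cdot(x)_2<(p\,w)_2$, where $c$ is a rational with a power-of-two denominator, $p$ is a short fixed binary prefix, and $w$ is an arbitrary tail subject only to the length and letter-count conditions in the statement.

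I would carry out three moves. First, reduce the left word $x$ to a single nonzero digit — or, in parts~7 and~9, to one nonzero digit followed by a block $(\tfrac cd)^a$. If $x$ contains a maximal run $1^a$ of ones that should vanish, replace it by a single $1$ with the correct zero-padding, which is exactly Lemma~\ref{l4}(4) (or~(6) and~(9) when the run abuts an end or is flanked by ones), e.g.\ $(0^{b_1}1^a0^{b_2})_2<(0^{b_1-1}10^{a+b_2})_2$; a leading pair $11$ is handled by the carry in the next move rather than collapsed. Second, multiply by $c$ and push it into the nonzero digit(s), using base-two carry identities such as
\begin{gather*}
\tfrac32\cdot2=3,\quad \tfrac34\cdot2^{m}=3\cdot2^{m-2},\quad \tfrac83\cdot3=8,\quad \tfrac53\cdot3=5,\\
\tfrac23\cdot3=2,\quad \tfrac56\cdot3=\tfrac52,\quad \tfrac{11}{8}=1+\tfrac14+\tfrac18,\quad \tfrac{13}{8}=1+\tfrac12+\tfrac18,
\end{gather*}
exactly in the spirit of the sample computation $\tfrac23(0110)_2=(1000)_2$ preceding the lemma; each of these turns $c$ into a clean finite binary pattern — $11$, $1011$, $1000$, $1101$, $101$, and the like — that constitutes the required prefix $p$, up to the leading zeros of $p$. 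Third, the word now produced is $p$ followed by zeros only — or $p$ followed by $(\tfrac cd)^a$ and then zeros — hence it is $\le(p\,w)_2$ for every admissible $w$, since among the words of the prescribed length and one-count beginning with $p$ the one ending in zeros is the smallest; and the inequality is strict because one earlier step is strict, namely the run-collapse of the first move, a shift induced by multiplication by a power of two, or the digitwise estimate $(\tfrac cd)^a<1^a$ when the surviving run is nonempty.

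The only place where a careless write-up goes wrong is the bookkeeping of lengths and one-counts across the carries: multiplying by $c$ changes the length of the bracketed word, so for each part I would recompute $|w|$ and $|w|_1$ from the two global constraints, check that the clean pattern really begins with $p$ and has exactly the right number of residual positions, and check that those positions are zeros (or align with the surviving block inside $w$). I expect parts~7 and~9 to need the most attention, since there a run $1^a$ survives the carry as $(\tfrac cd)^a$ and must be matched against the corresponding block of $w$; the remaining parts reduce to the routine ``collapse, carry, pad'' recipe once the constant is decomposed as displayed above.
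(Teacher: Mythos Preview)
Your three-step recipe---collapse the run $1^a$ to a single leading $1$ (when applicable), push the rational constant into that digit via base-two carries to produce the prefix $p$, then compare $p0\cdots0$ (or $p$ followed by a fractional block) against $(pw)_2$---is exactly the paper's proof, carried out line by line for each of the nine parts. Two small slips to fix when you write it up: not all constants have power-of-two denominator (parts 3, 5, 7, 9 have denominator $3$ or $6$, which you in fact handle correctly via the carry $\tfrac{c}{3}\cdot 11\mapsto$ integer), and part~8 also leaves a surviving fractional block $(\tfrac34)^{a_2}$ and a length change, so it belongs with parts~7 and~9 on your ``extra attention'' list.
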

\begin{proof}
1) $\frac{3}{2}\cdot(0^{b_1+1} 1^a 0^{b_2})_2 < \frac{3}{2}\cdot(0^{b_1}10^{a+b_2})_2 = (0^{b_1}\frac{3}{2}0^{a+b_2})_2 = (0^{b_1}110^{a+b_2-1})_2 < (0^{b_1}11w)_2$.

2) $\frac{11}{8}\cdot(0^{b_1+1} 1^a 0^{b_2})_2 < \frac{11}{8}\cdot(0^{b_1}10^{a+b_2})_2 = (0^{b_1}\frac{11}{8}0^{a+b_2})_2 = (0^{b_1}10110^{a+b_2-3})_2 < (0^{b_1}1011w)_2$.

3) $\frac{8}{3}\cdot(000011 0^b)_2 = (0000\frac{8}{3}\frac{8}{3} 0^b)_2 = (0000\frac{12}{3}0 0^b)_2 = (001000 0^b)_2 < (001w)_2$.

4) $\frac{13}{8}\cdot(0001^a 0^b)_2 < \frac{13}{8}\cdot(0010^{a+b})_2 = (00\frac{13}{8}0^{a+b})_2 = (0011010^{a+b-3})_2 < (001101w)_2$.

5) $\frac{5}{3}\cdot(00011 0^b)_2 = (000\frac{5}{3}\frac{5}{3}0^b)_2 = (000\frac{6}{3}10^b)_2 = (001010^b)_2 < (00101w)_2$.

6) $\frac{3}{4}\cdot(01^a 0^b)_2 < \frac{3}{4}\cdot(1 0^{a+b})_2 = (\frac{3}{4} 0^{a+b})_2 = (0\frac{6}{4} 0^{a+b-1})_2 = (011 0^{a+b-2})_2 < (011w)_2$.

7) $\frac{2}{3}\cdot(0^{b_1} 11 0^{b_2} 1^{a})_2 = (0^{b_1} \frac{2}{3}\frac{2}{3} 0^{b_2} \frac{2}{3}^{a})_2 = (0^{b_1} 10 0^{b_2} \frac{2}{3}^{a})_2 < (0^{b_1}1w)_2$

8) $\frac{3}{4}\cdot(1^{a_1} 0^{b} 1^{a_2})_2 < \frac{3}{4}\cdot(1 0^{a_1+b} 1^{a_2})_2 = (\frac{3}{4} 0^{a_1+b} \frac{3}{4}^{a_2})_2 = (011 0^{a_1+b-2} \frac{3}{4}^{a_2})_2$ $< (11w)_2$
(notice that the length of the base-2 expansion changes after the first and last inequality).

9) $\frac{5}{6}\cdot(0^{b_1} 11 0^{b_2} 1^{a})_2 = (0^{b_1} \frac{5}{6}\frac{5}{6} 0^{b_2} \frac{5}{6}^{a})_2 = (0^{b_1} 1\frac{3}{6} 0^{b_2} \frac{5}{6}^{a})_2 = (0^{b_1} 101 0^{b_2-1} \frac{5}{6}^{a})_2 < (0^{b_1}101w)_2$.
\end{proof}

\vspace{0.3cm}
\begin{table}[ht]
\begin{center}
\begin{tabular}{|c|l|l|l|c|c|}
\hline
$i$ & \multicolumn{1}{|c|}{$u_i$} & \multicolumn{2}{|c|}{$w_i$} & \multicolumn{2}{|c|}{Multiplier($i$)}\\ \cline{3-6}
& & \multicolumn{1}{|c|}{Case 1.1} & \multicolumn{1}{|c|}{Case 1.2} & \multicolumn{1}{|c|}{Case 1.1} & \multicolumn{1}{|c|}{Case 1.2} \\
\hline
1 & $0^{q-p}1^{p}$ & \multicolumn{2}{|l|}{$0^{n}1w'_1$\quad ($n\leq q-p-2$)} & \multicolumn{2}{|c|}{2} \\
\hline
2 & $0^{q-p-1}1^{p}0$ & \multicolumn{2}{|l|}{$0^{n}1w'_2$\quad ($n\leq q-p-3$)} & \multicolumn{2}{|c|}{2} \\
\hline
$\ldots$ & \multicolumn{1}{|c|}{$\ldots$} & \multicolumn{2}{|c|}{$\ldots$} & \multicolumn{2}{|c|}{$\ldots$} \\
\hline
$q-p-5$ & $0000001^{p}0^{q-p-6}$ & \multicolumn{2}{|l|}{$0^{n}1w'_{q-p-5}$\quad ($n\leq 4$)} & \multicolumn{2}{|c|}{2} \\
\hline
$q-p-4$ & $000001^{p}0^{q-p-5}$ & $001w'_{q-p-4}$ & $0001w'_{q-p-4}$ & \multicolumn{1}{@{\ \ \ \ \ } c @{\ \ \ \ \ }|}{4} & 2 \\
\hline
$q-p-3$ & $00001^{p}0^{q-p-4}$ & $001w'_{q-p-3}$ & $0010101w'_{q-p-3}$ & 2 & $\frac{21}{8}$ \\
\hline
$q-p-2$ & $0001^{p}0^{q-p-3}$ & \multicolumn{2}{|l|}{$01w'_{q-p-2}$} & \multicolumn{2}{|c|}{2} \\
\hline
$q-p-1$ & $001^{p}0^{q-p-2}$ & \multicolumn{2}{|l|}{$01w'_{q-p-1}$} & \multicolumn{2}{|c|}{1} \\
\hline
$q-p$ & $01^{p}0^{q-p-1}$ & \multicolumn{2}{|l|}{$01w'_{q-p}$} & \multicolumn{2}{|c|}{$\frac{1}{2}$} \\
\hline
$q-p+1$ & $10^{q-p}1^{p-1}$ & \multicolumn{2}{|l|}{$1w'_{q-p+1}$} & \multicolumn{2}{|c|}{1} \\
\hline
$q-p+2$ & $110^{q-p}1^{p-2}$ & \multicolumn{2}{|l|}{$1w'_{q-p+2}$} & \multicolumn{2}{|c|}{$\frac{2}{3}$} \\
\hline
$q-p+3$ & $1110^{q-p}1^{p-3}$ & \multicolumn{2}{|l|}{$1w'_{q-p+3}$} & \multicolumn{2}{|c|}{$\frac{4}{7}$} \\
\hline
$q-p+4$ & $11110^{q-p}1^{p-4}$ & \multicolumn{2}{|l|}{$1w'_{q-p+4}$} & \multicolumn{2}{|c|}{$\frac{1}{2}$} \\
\hline
$\ldots$ & \multicolumn{1}{|c|}{$\ldots$} & \multicolumn{2}{|c|}{$\ldots$} & \multicolumn{2}{|c|}{$\ldots$} \\
\hline
$q-1$ & $1^{p-1}0^{q-p}1$ & \multicolumn{2}{|l|}{$1w'_{q-1}$} & \multicolumn{2}{|c|}{$\frac{1}{2}$} \\
\hline
$q$ & $1^{p}0^{q-p}$ & \multicolumn{2}{|l|}{$1w'_{q}$} & \multicolumn{2}{|c|}{$\frac{1}{2}$} \\
\hline
\end{tabular}
\caption{Case 1 in Theorem \ref{t5}.}
\label{table:table1}
\end{center}
\end{table}
\vspace{0.3cm}

The idea of the proof of Theorem \ref{t5} is to multiply the base-2 expansions of the words in the orbit $(u_1,\ldots,u_q)$ of the most unbalanced word $u=0^{q-p}1^p$
with some number so that the base-2 expansion of the corresponding word in the orbit $(w_1,\ldots,w_q)$ of any other word $w\in \mathbb{W}_{p,q}$ is larger.
If the product of all the multipliers is at least one then we get that the product of $u$ is smaller than the product of $w$.
Table \ref{table:table1} tells what are the multipliers for each word in case 1 of the proof of Theorem \ref{t5}.
If we multiply the base-2 expansion of $u_i$ with Multiplier($i$) we get smaller number than the base-2 expansion of $w_i$. 
We get that the product of the multipliers really is at least one:
in case 1.1 $\prod_{i=1}^{q}{\textrm{Multiplier}(i)}=2^{q-p-5}\cdot 4\cdot 2\cdot 2\cdot 1 \cdot \frac{1}{2}\cdot 1\cdot \frac{2}{3}\cdot \frac{4}{7}\cdot \frac{1}{2}^{p-3}=\frac{32}{21}\cdot 2^{(q-p)-p-1}$
and in case 1.2 $\prod_{i=1}^{q}{\textrm{Multiplier}(i)}=2^{q-p-4}\cdot \frac{21}{8}\cdot 2\cdot 1 \cdot \frac{1}{2}\cdot 1\cdot \frac{2}{3}\cdot \frac{4}{7}\cdot \frac{1}{2}^{p-3}=2^{(q-p)-p-1}$,
where $(q-p)-p-1\geq 0$ because we will suppose that there are more zeros than ones, i.e. $p<q-p$.
The theorem is probably true even without the assumption that there are more zeros than ones, but it would be more difficult to prove.

\begin{theorem}\label{t5}
Suppose $1\leq p < q$ are integers such that $p<q-p$. For $w\in \mathbb{W}_{p,q}$ the product
$P(w)=\prod^{q}_{i=1}{\mathcal{I}_i(w)}$ is minimized precisely when $w=0^{q-p}1^p$.
\end{theorem}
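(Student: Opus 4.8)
The plan is to carry out the multiplier argument previewed just before the statement. First I would write the orbit of $u=0^{q-p}1^{p}$ explicitly. Since $u$ is a single block of $q-p$ zeros followed by a single block of $p$ ones, it has exactly one cyclic shift with $k$ leading zeros for each $k=1,\dots,q-p$ and exactly one with $k$ leading ones for each $k=1,\dots,p$, so, writing $\mathcal{O}(u)=(u_1,\dots,u_q)$, one has $u_i=0^{q-p-i+1}1^{p}0^{i-1}$ for $1\le i\le q-p$ and $u_{q-p+j}=1^{j}0^{q-p}1^{p-j}$ for $1\le j\le p$; these are the entries of the $u_i$-column of Table~\ref{table:table1}. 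Next I would record the analogous information for an arbitrary orbit $w\ne u$ in $\mathbb{W}_{p,q}$, written in block form $w=0^{r_1}1^{s_1}\cdots 0^{r_n}1^{s_n}$ with $n\ge 2$ and all $r_i,s_i\ge 1$. The key combinatorial observation is that for every $k\ge 1$ the number of cyclic shifts of $w$ with at least $k$ leading zeros equals $\sum_i\max(r_i-k+1,0)$ and the number with at least $k$ leading ones equals $\sum_i\max(s_i-k+1,0)$; since the shifts are listed lexicographically, more leading zeros moves a shift earlier and more leading ones moves it later. From this the prefix of each $w_i$ is controlled: the last $n$ of the zero-starting words $w_{q-p-n+1},\dots,w_{q-p}$ begin with $01$; for $i\le q-p-1$ the word $w_i$ has at most $q-p-i$ leading zeros (using $r_j\le q-p-1$ because $w\ne u$); the one-starting words $w_{q-p+1},\dots,w_{q}$ begin with successively more $1$'s; and the finer prefixes ($001$ versus $0101$ versus $011$, and $0001$, $0010101$, $001101$, \dots) are determined by how many of the $r_j$ (resp.\ $s_j$) are $\ge 2$, $\ge 3$, and so on.

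The bookkeeping that makes the argument work is this: it suffices to produce, for each $1\le i\le q$, a positive rational $m_i$ with $m_i\,(u_i)_2\le (w_i)_2$, at least one of these strict, and $\prod_{i=1}^{q}m_i\ge 1$, because then, all factors being positive,
\[
P(u)=\prod_{i}\mathcal{I}_i(u)\ \le\ \Bigl(\prod_{i}m_i\Bigr)\prod_{i}\mathcal{I}_i(u)=\prod_{i}m_i\,(u_i)_2\ \le\ \prod_{i}(w_i)_2=P(w),
\]
with the last inequality strict. On the "ones side" the multipliers are fixed once and for all, as in Table~\ref{table:table1}: $\tfrac12$ at position $q-p$, $1$ at $q-p+1$, $\tfrac23$ at $q-p+2$, $\tfrac47$ at $q-p+3$, and $\tfrac12$ at each of $q-p+4,\dots,q$, with product $\tfrac{32}{21}2^{-p}$. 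The multipliers on the "zeros side" (positions $1,\dots,q-p-1$, which are $2$ except at $q-p-4,q-p-3$, where they are $4$ or $\tfrac{21}{8}$ depending on the block structure) must then contribute at least $\tfrac{21}{32}2^{p}$, and one checks case by case that their product makes $\prod_i m_i$ equal to $2^{(q-p)-p-1}$ times a constant $\ge 1$ — which is $\ge 1$ exactly because the hypothesis is $p<q-p$. The per-position inequalities $m_i\,(u_i)_2\le (w_i)_2$ are precisely the content of Lemmas~\ref{l4} and~\ref{l5}: each part there multiplies a block-shaped base-$2$ number by one of these constants and bounds it by a number whose leading zero-run (resp.\ one-run) is shorter and whose tail is arbitrary, so one simply applies the part matching the prefix of $w_i$ found above.

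The last step — and the main obstacle — is the case analysis needed to guarantee, for every admissible block structure of $w$, that the prefixes of the $w_i$ are as claimed \emph{and} that the multiplier product stays $\ge 1$ simultaneously with every inequality $m_i\,(u_i)_2\le (w_i)_2$. A single uniform choice does not suffice: when $w$ has a zero-block of length close to $q-p$ (the extreme being $w=0^{q-p-1}1^{s_1}0\,1^{s_2}$), the words $w_1,\dots,w_{q-p-2}$ differ from the corresponding $u_i$ by only one leading zero, so they cannot absorb a factor of $2$; such configurations, together with those where $q-p$ or $n$ is small, have to be handled separately with ad hoc multipliers, in the spirit of the small-$n$ cases in the proof of Theorem~\ref{t4}. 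In the main case ($q-p$ large, enough short zero-blocks) one splits further according to whether $w$ has two zero-blocks of length $\ge 2$ (Case~1.1, giving prefixes $001$ at positions $q-p-4,q-p-3$ with multipliers $4,2$) or at most one (Case~1.2, giving $0001$ and $0010101$ with multipliers $2,\tfrac{21}{8}$), exactly as in Table~\ref{table:table1}. The bulk of the work is then the verification, case by case and using Lemmas~\ref{l4}--\ref{l5}, that the chosen $m_i$ are admissible at every position and multiply to at least $1$; the uniqueness of the minimiser falls out of the single strict inequality that always occurs.
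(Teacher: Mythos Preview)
Your plan is the paper's approach exactly: a multiplier argument with per-position inequalities drawn from Lemmas~\ref{l4} and~\ref{l5}, split into a main case $n\ge 3$ (your Cases~1.1 and~1.2 are the paper's) and the case $n=2$ handled separately with different multipliers.

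One point to correct before you carry it out: the ones-side multipliers are \emph{not} ``fixed once and for all''. In the paper's $n=2$ analysis (five subcases, according to which of $r_1,r_2,s_1,s_2$ equal~$1$) the positions $q-p,\dots,q$ receive multipliers such as $\tfrac34$, $\tfrac56$, $\tfrac23$ rather than the Case~1 values $\tfrac12$, $\tfrac23$, $\tfrac47$, $\tfrac12$. This is forced: when $n=2$ and one $r_i=1$, the zeros-side words $w_1,\dots,w_{q-p-1}$ gain only one leading zero over the corresponding $u_i$ and can absorb only factors like $\tfrac{11}{8}$, $\tfrac{13}{8}$, $\tfrac{5}{3}$ (Lemma~\ref{l5}), not~$2$; with the Case~1 ones-side values the total product of multipliers would then drop below~$1$ (for instance in the subcase $r_1=1$, $s_1\ge 2$ one gets roughly $(\tfrac{11}{8})^{q-p-1}\cdot \tfrac{32}{21}2^{-p}$, which fails for $q-p=p+1$). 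So the ``ad hoc'' adjustment you anticipate has to happen on \emph{both} sides, and the relevant case split is simply $n\ge 3$ versus $n=2$ rather than the length of the longest zero-block.
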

\begin{proof}
We mark $u=0^{q-p}1^p$ and $w=0^{r_1}1^{s_1}0^{r_2}1^{s_2}\ldots 0^{r_n}1^{s_n}$, where $\sum_{i=1}^{n}{s_i}=p$, $\sum_{i=1}^{n}{r_i}=q-p$, $n\geq2$ and $\forall i: r_i,s_i > 0$.
The orbits of $w$ and $u$ are marked with $(w_1,\ldots,w_q)$ and $(u_1,\ldots,u_q)$.
Our goal is to prove that $P(u)<P(w)$. We divide the proof into two cases: 1) $n\geq3$ and 2) $n=2$.

1) We divide this case into two subcases: 1.1) $\exists i,j$ ($i\neq j$): $r_i,r_j\geq2$ and 1.2) $\exists! i: r_i\geq2$.
Notice that at least one $r_i$ has to be at least two because otherwise there would not be more zeros than ones.

We get Table \ref{table:table1} by writing the orbits of $u$ and $w$ in (lexicographic) order.
There are $p$ number of ones so the words from $w_q$ to $w_{q-p}$ start with the letter $1$ (the rest of the word is marked with $w_i'$).
Because $n\geq3$ the next three words from $w_{q-p-1}$ to $w_{q-p-3}$ start with $01$.
In case 1.1 there are at least two blocks of zeros of length at least two, which means that the next two words $w_{q-p-3}$ and $w_{q-p-4}$ can start with $01,01$ or $01,001$ or $001,001$.
We suppose that the words start with $001,001$ because that makes the product $P(w)$ smallest.
In case 1.2 there is only one block of zeros which is of length at least 2, which means that the word $w_{q-p-3}$ starts with $01,0011,001011$ or $0010101$.
Similar to the case 1.1, we suppose that it starts with $0010101$ because that makes the product $P(w)$ smallest.

The number of zeros in front of the word cannot increase with more than one, when moving one word upwards.
Since $(10^a)_2>(01^a)_2$, we get the smallest possible $w_i$ for the rest by doing exactly that.

From Lemma \ref{l4} we now get directly the following inequalities:
\begin{center}
\begin{tabular}{r @{\ } l}
$2\cdot(u_{q-p-i})_2$&$< (w_{q-p-i})_2\ \textrm{for every}\ 2\leq i\leq q-p-1$ \\
$4\cdot(u_{q-p-4})_2$&$< (w_{q-p-4})_2$ (case 1.1) \\
$21/8\cdot(u_{q-p-3})_2$&$< (w_{q-p-3})_2$ (case 1.2) \\
$(u_{q-p-1})_2$&$< (w_{q-p-1})_2$ \\
$1/2\cdot(u_{q-p})_2$&$< (w_{q-p})_2$ \\
$(u_{q-p+1})_2$&$< (w_{q-p+1})_2$ \\
$2/3\cdot(u_{q-p+2})_2$&$< (w_{q-p+2})_2$ \\
$4/7\cdot(u_{q-p+3})_2$&$< (w_{q-p+3})_2$ \\
$1/2\cdot(u_{q-p+i})_2$&$< (w_{q-p+i})_2\ \textrm{for every}\ 4\leq i\leq p.$ \\
\end{tabular}
\end{center}

We already calculated that the products of the multipliers are at least one:
$2^{q-p-5}\cdot 4\cdot 2\cdot 2\cdot 1 \cdot \frac{1}{2}\cdot 1\cdot \frac{2}{3}\cdot \frac{4}{7}\cdot \frac{1}{2}^{p-3}=\frac{32}{21}\cdot 2^{(q-p)-p-1}>1$
and $2^{q-p-4}\cdot \frac{21}{8}\cdot 2\cdot 1 \cdot \frac{1}{2}\cdot 1\cdot \frac{2}{3}\cdot \frac{4}{7}\cdot \frac{1}{2}^{p-3}=2^{(q-p)-p-1}\geq1$,
where $(q-p)-p-1\geq 0$ because $p<q-p$. From these facts we get our claim:

1.1) $ P(u)=\prod_{i=1}^{q}{(u_{i})_2} < \prod_{i=1}^{q-p-5}[2(u_{i})_2]\cdot 4(u_{q-p-3})_2\cdot 2(u_{q-p-2})_2(u_{q-p-1})_2 \cdot 1/2(u_{q-p})_2 $
$ (u_{q-p+1})_2 \cdot 2/3(u_{q-p+2})_2\cdot 4/7(u_{q-p+3})_2\cdot \prod_{i=q-p+4}^{q}{1/2(u_{i})_2} < \prod_{i=1}^{q}{(w_{i})_2} = P(w). $

1.2) $ P(u)=\prod_{i=1}^{q}{(u_{i})_2} \leq \prod_{i=1}^{q-p-4}[2(u_{i})_2]\cdot 21/8(u_{q-p-3})_2\cdot 2(u_{q-p-2})_2(u_{q-p-1})_2 \cdot 1/2(u_{q-p})_2 $
$ (u_{q-p+1})_2 \cdot 2/3(u_{q-p+2})_2\cdot 4/7(u_{q-p+3})_2\cdot \prod_{i=q-p+4}^{q}{1/2(u_{i})_2} < \prod_{i=1}^{q}{(w_{i})_2} = P(w). $

\vspace{0.3cm}
\begin{table}[ht]
\begin{center}
\begin{tabular}{|p{1.5cm}|l|l|l|l|l|l|c|c|c|c|c|}
\hline
\multicolumn{1}{|c|}{$i$} & \multicolumn{1}{|c|}{$u_i$} & \multicolumn{5}{|c|}{Prefixes of $w_i$} & \multicolumn{5}{|c|}{Multiplier($i$)}\\ \cline{3-12}
& & \multicolumn{1}{|c|}{2.1} & \multicolumn{1}{|c|}{2.2} & \multicolumn{1}{|c|}{2.3} & \multicolumn{1}{|c|}{2.4} & \multicolumn{1}{|c|}{2.5} & 2.1 & 2.2 & 2.3 & 2.4 & 2.5 \\
\hline
\multicolumn{1}{|c|}{1} & $0^{q-p}1^{p}$ & $0^{n}1$ & $0^{n}1$ & $0^{n}1101$ & $0^{n}1011$ & $0^{n}101$ & 2 & $\frac{8}{3}$ & $\frac{13}{8}$ & $\frac{11}{8}$ & $\frac{5}{3}$ \\
\hline
\multicolumn{1}{|c|}{$\ldots$} & \multicolumn{1}{|c|}{$\ldots$} & \multicolumn{1}{|c|}{$\ldots$} & \multicolumn{1}{|c|}{$\ldots$} & \multicolumn{1}{|c|}{$\ldots$} &
\multicolumn{1}{|c|}{$\ldots$} & \multicolumn{1}{|c|}{$\ldots$} & $\ldots$ & $\ldots$ & $\ldots$ & $\ldots$ & $\ldots$ \\
\hline
$q-p-3$ & $00001^{p}0^{q-p-4}$ & 001 & 001 & \multicolumn{1}{|c|}{$\ldots$} & \multicolumn{1}{|c|}{$\ldots$} & \multicolumn{1}{|c|}{$\ldots$} & 2 & $\frac{8}{3}$ & $\ldots$ & $\ldots$ & $\ldots$ \\
\hline
$q-p-2$ & $0001^{p}0^{q-p-3}$ & 0011 & 001 & 001101 & \multicolumn{1}{|c|}{$\ldots$} & 00101 & $\frac{3}{2}$ & 1 & $\frac{13}{8}$ & $\ldots$ & $\frac{5}{3}$ \\
\hline
$q-p-1$ & $001^{p}0^{q-p-2}$ & 01 & 01 & 01 & 01011 & 01 & 1 & 1 & 1 & $\frac{11}{8}$ & 1 \\
\hline
$q-p$ & $01^{p}0^{q-p-1}$ & 011 & 01 & 011 & 011 & 0101 & $\frac{3}{4}$ & $\frac{2}{3}$ & $\frac{3}{4}$ & $\frac{3}{4}$ & $\frac{5}{6}$ \\
\hline
$q-p+1$ & $10^{q-p}1^{p-1}$ & 10 & 10 & 10 & 10 & 10 & 1 & 1 & 1 & 1 & 1 \\
\hline
$q-p+2$ & $110^{q-p}1^{p-2}$ & 10 & 10 & 101 & 101 & 101 & $\frac{2}{3}$ & $\frac{2}{3}$ & $\frac{5}{6}$ & $\frac{5}{6}$ & $\frac{5}{6}$ \\
\hline
$q-p+3$ & $1110^{q-p}1^{p-3}$ & 11 &  & 11 & 11 &  & $\frac{3}{4}$ &  & $\frac{3}{4}$ & $\frac{3}{4}$ &  \\
\hline
\multicolumn{1}{|c|}{$\ldots$} & \multicolumn{1}{|c|}{$\ldots$} & \multicolumn{1}{|c|}{$\ldots$} &  & \multicolumn{1}{|c|}{$\ldots$} & \multicolumn{1}{|c|}{$\ldots$} &  & $\ldots$ &  & $\ldots$ & $\ldots$ &  \\
\hline
\multicolumn{1}{|c|}{$q$} & $1^{p}0^{q-p}$ & 11 &  & 11 & 11 &  & $\frac{3}{4}$ &  & $\frac{3}{4}$ & $\frac{3}{4}$ &  \\
\hline
\end{tabular}
\caption{Case 2 in Theorem \ref{t5}.}
\label{table:table2}
\end{center}
\end{table}
\vspace{0.3cm}

2) This case is similar to the previous one. We divide it into five subcases depending on the values of $r_1,r_2,s_1$ and $s_2$.
Notice that case $r_1,r_2=1$ is impossible because then we would have $2\leq s_1+s_2=p<q-p=r_1+r_2=2$.

2.1) $r_1,r_2\geq 2$ and $(s_1,s_2)\neq (1,1)$

2.2) $r_1,r_2\geq 2$ and $s_1,s_2=1$

2.3) $r_1=1,r_2\geq 2$ and $s_1\geq 1,s_2\geq 2$

2.4) $r_1=1,r_2\geq 2$ and $s_1\geq 2,s_2\geq 1$

2.5) $r_1=1,r_2\geq 2$ and $s_1,s_2=1$.

We get Table \ref{table:table2} by using the same kind of reasoning as in case 1 (the suffixes $w_i'$ of $w_i$ have been left out to save space):

There are $p$ number of ones and $n=2$ so the words from $w_q$ to $w_{q-p+3}$ start with $11$ and the words $w_{q-p+2}$ and $w_{q-p+1}$ start with $10$. 
In cases 2.3, 2.4 and 2.5 we have $r_1=1$ so we additionally know that the word $w_{q-p+2}$ starts with $101$.

There are $q-p$ number of zeros and $n=2$ so the words $w_{q-p}$ and $w_{q-p-1}$ start with $01$.
In addition, in cases 2.1, 2.3 and 2.4 we have $s_1$ or $s_2\geq2$, which means the word $w_{q-p}$ starts with $011$.
In case 2.5 we have $s_1,s_2 = 1$, which means the word $w_{q-p}$ starts with $0101$.
In addition, in case 2.4 we have $r_1=1$ and $s_1\geq 2$, which means the word $w_{q-p-1}$ starts with either $011$ or $01011$, from which we choose the smaller one $01011$.

In cases 2.1 and 2.3 the word $w_{q-p-2}$ starts with $0011$ because $r_1,s_1\geq 2$ or $r_2,s_2\geq 2$.
In addition, in case 2.3 we have $r_1=1$, which means it starts with $00111$ or $001101$, from which we choose the smaller one $001101$.
In cases 2.2 and 2.5 the word $w_{q-p-2}$ starts with $001$ because $r_1$ or $r_2\geq 2$.
In addition, in case 2.5 we have $r_1,s_1,s_2=1$, which means the word $w_{q-p-2}$ starts with $00101$.
In cases 2.1 and 2.2 the word $w_{q-p-3}$ starts with $001$ because $r_1,r_2\geq 2$.

We get the smallest possible $w_i$ for the rest of the words by increasing the number of zeros in front of the word by one, until $i=1$.

From Lemmas \ref{l4} and \ref{l5} we now get the following inequalities:

2.1) \begin{center}
\begin{tabular}{r @{\ } l}
$2\cdot(u_{q-p-i})_2$&$< (w_{q-p-i})_2\ \textrm{for every}\ 3\leq i\leq q-p-1$ (Lemma \ref{l4}, 1)\\
$3/2\cdot(u_{q-p-2})_2$&$< (w_{q-p-2})_2$ (Lemma \ref{l5}, 1)\\
$(u_{q-p-1})_2$&$< (w_{q-p-1})_2$ (Lemma \ref{l4}, 4)\\
$3/4\cdot(u_{q-p})_2$&$< (w_{q-p})_2$ (Lemma \ref{l5}, 6)\\
$(u_{q-p+1})_2$&$< (w_{q-p+1})_2$ (Lemma \ref{l4}, 6)\\
$2/3\cdot(u_{q-p+2})_2$&$< (w_{q-p+2})_2$ (Lemma \ref{l5}, 7)\\
$3/4\cdot(u_{q-p+i})_2$&$< (w_{q-p+i})_2\ \textrm{for every}\ 3\leq i\leq p$ (Lemma \ref{l5}, 8).\\
\end{tabular}
\end{center}

2.2)
\begin{center}
\begin{tabular}{r @{\ } l}
$8/3\cdot(u_{q-p-i})_2$&$< (w_{q-p-i})_2\ \textrm{for every}\ 3\leq i\leq q-p-1$ (Lemma \ref{l5}, 3)\\
$(u_{q-p-2})_2$&$< (w_{q-p-2})_2$ (Lemma \ref{l4}, 4)\\
$(u_{q-p-1})_2$&$< (w_{q-p-1})_2$ (Lemma \ref{l4}, 4)\\
$2/3\cdot(u_{q-p})_2$&$< (w_{q-p})_2$ (Lemma \ref{l5}, 7)\\
$(u_{q-p+1})_2$&$< (w_{q-p+1})_2$ (Lemma \ref{l4}, 6)\\
$2/3\cdot(u_{q-p+2})_2$&$< (w_{q-p+2})_2$ (Lemma \ref{l5}, 7).\\
\end{tabular}
\end{center}

2.3)
\begin{center}
\begin{tabular}{r @{\ } l}
$13/8\cdot(u_{q-p-i})_2$&$< (w_{q-p-i})_2\ \textrm{for every}\ 2\leq i\leq q-p-1$ (Lemma \ref{l5}, 4)\\
$(u_{q-p-1})_2$&$< (w_{q-p-1})_2$ (Lemma \ref{l4}, 4)\\
$3/4\cdot(u_{q-p})_2$&$< (w_{q-p})_2$ (Lemma \ref{l5}, 6)\\
$(u_{q-p+1})_2$&$< (w_{q-p+1})_2$ (Lemma \ref{l4}, 6)\\
$5/6\cdot(u_{q-p+2})_2$&$< (w_{q-p+2})_2$ (Lemma \ref{l5}, 9)\\
$3/4\cdot(u_{q-p+i})_2$&$< (w_{q-p+i})_2\ \textrm{for every}\ 3\leq i\leq p$ (Lemma \ref{l5}, 8).\\
\end{tabular}
\end{center}

2.4)
\begin{center}
\begin{tabular}{r @{\ } l}
$11/8\cdot(u_{q-p-i})_2$&$< (w_{q-p-i})_2\ \textrm{for every}\ 1\leq i\leq q-p-1$ (Lemma \ref{l5}, 2)\\
$3/4\cdot(u_{q-p})_2$&$< (w_{q-p})_2$ (Lemma \ref{l5}, 6)\\
$(u_{q-p+1})_2$&$< (w_{q-p+1})_2$ (Lemma \ref{l4}, 6)\\
$5/6\cdot(u_{q-p+2})_2$&$< (w_{q-p+2})_2$ (Lemma \ref{l5}, 9)\\
$3/4\cdot(u_{q-p+i})_2$&$< (w_{q-p+i})_2\ \textrm{for every}\ 3\leq i\leq p$ (Lemma \ref{l5}, 8).\\
\end{tabular}
\end{center}

2.5)
\begin{center}
\begin{tabular}{r @{\ } l}
$5/3\cdot(u_{q-p-i})_2$&$< (w_{q-p-i})_2\ \textrm{for every}\ 2\leq i\leq q-p-1$ (Lemma \ref{l5}, 5)\\
$(u_{q-p-1})_2$&$< (w_{q-p-1})_2$ (Lemma \ref{l4}, 4)\\
$5/6\cdot(u_{q-p})_2$&$< (w_{q-p})_2$ (Lemma \ref{l5}, 9)\\
$(u_{q-p+1})_2$&$< (w_{q-p+1})_2$ (Lemma \ref{l4}, 6)\\
$5/6\cdot(u_{q-p+2})_2$&$< (w_{q-p+2})_2$ (Lemma \ref{l5}, 9).\\
\end{tabular}
\end{center}

All we need to do anymore is to calculate that the products of the multipliers are at least one:

2.1) $2^{q-p-3}\cdot \frac{3}{2}\cdot \frac{3}{4}\cdot \frac{2}{3}\cdot \frac{3}{4}^{p-3}=\frac{9}{8}\cdot \frac{3}{2}^{p-4}2^{q-2p}>1$

2.2) $\frac{8}{3}^{q-p-3}\cdot \frac{2}{3} \cdot \frac{2}{3} = \frac{32}{27}\cdot\frac{8}{3}^{q-p-4}>1$

2.3) $\frac{13}{8}^{q-p-2}\cdot \frac{3}{4} \cdot \frac{5}{6} \cdot \frac{3}{4}^{p-3}=\frac{195}{192}\cdot \frac{39}{32}^{p-3}\frac{13}{8}^{q-2p}>1$

2.4) $\frac{11}{8}^{q-p-1}\cdot \frac{3}{4} \cdot \frac{5}{6}\cdot \frac{3}{4}^{p-3}=\frac{605}{512}\cdot \frac{33}{32}^{p-3}\frac{11}{8}^{q-2p}>1$

2.5) $\frac{5}{3}^{q-p-2} \cdot \frac{5}{6}\cdot \frac{5}{6}=\frac{125}{108}\cdot\frac{5}{3}^{q-p-3}>1.$
\end{proof}

\begin{figure}[ht]
\centering
\includegraphics[width=0.9\textwidth]{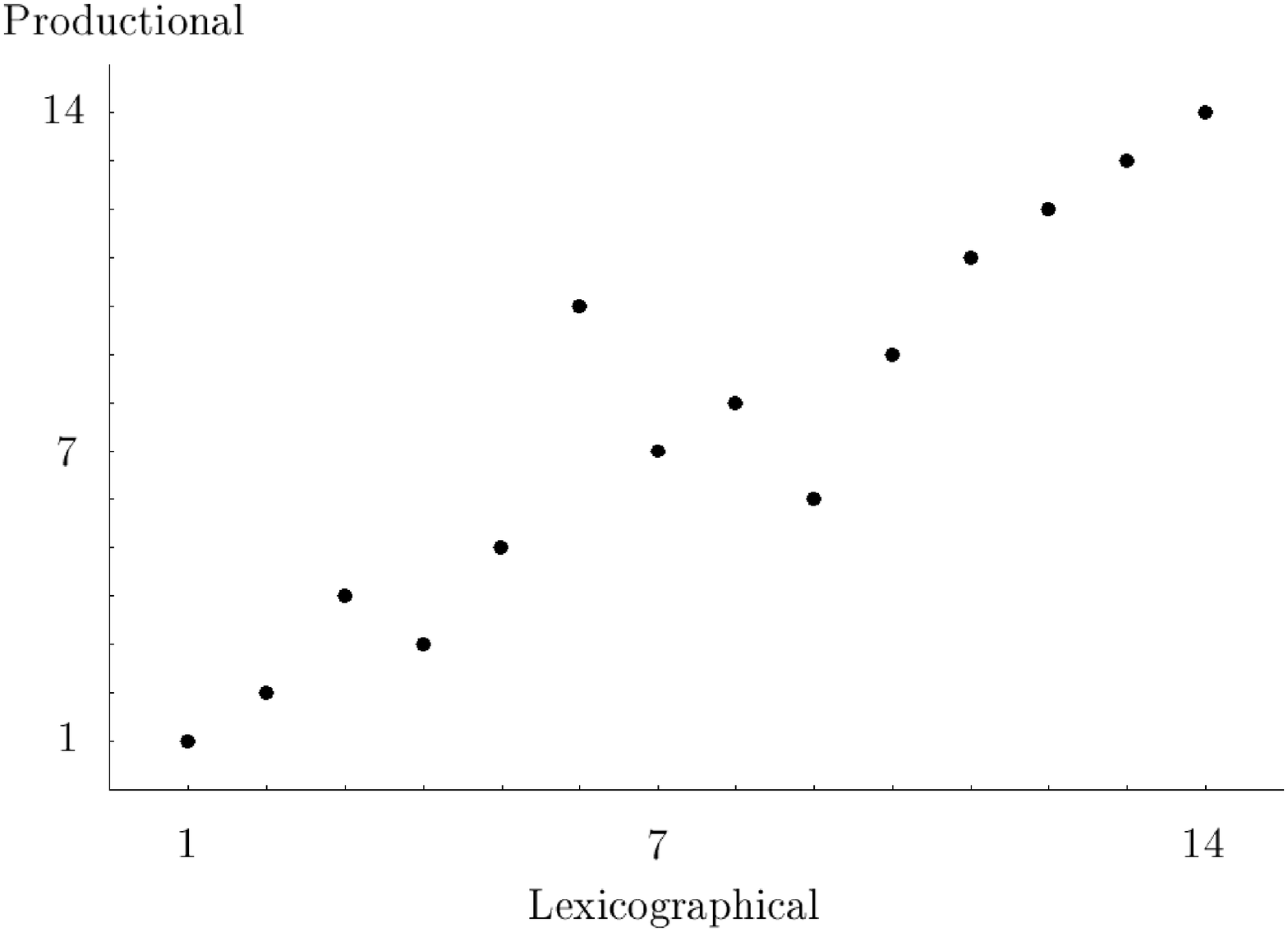}
\caption{The permutation between lexicographical and productional orders in $\mathbb{W}_{4,9}$.}
\label{pic2}
\end{figure}

We know that the lexicographically smallest orbit, the most unbalanced orbit $u$, gives the smallest product
and that the lexicographically largest orbit, the balanced orbit $b$, gives the largest product.
This does not apply generally to all the words between these two extremal words, i.e. a word may have a smaller product than a word which has smaller lexicoraphical order.

In \cite{jz} it was observed a permutation between the lexicographic ordering of an orbit $w\in\mathbb{W}_{p,q}$ and the \emph{dynamic} ordering $w,\sigma(w),\ldots,\sigma^{q-1}(w)$ of that same orbit.
They called it the \emph{lexidynamic permutation} for the word $w$.
We can also examine a permutation from the lexicographical order of an orbit in the whole $\mathbb{W}_{p,q}$ to the productional order of that orbit.
This means that the \emph{lexiproductional} permutation for the word $w\in\mathbb{W}_{p,q}$ always maps $1\mapsto1$ and $q\mapsto q$, where $|w|=q$.
Here is the permutation for $\mathbb{W}_{4,9}$ which is plotted in Figure \ref{pic2} (the product of the latter word is in parentheses):

$$000001111 \mapsto 000001111\ (17057310054912000000)$$
$$000010111 \mapsto 000010111\ (69309861547173120000)$$
$$000011011 \mapsto 000011101\ (103115999585285683200)$$
$$000011101 \mapsto 000011011\ (106107230996504524800)$$
$$000100111 \mapsto 000100111\ (184709385608811148800)$$
$$000101011 \mapsto 000111001\ (225726106934040832512)$$
$$000101101 \mapsto 000101101\ (287935726164372000000)$$
$$000110011 \mapsto 000110011\ (288046371229598615040)$$
$$000110101 \mapsto 000101011\ (294762710705942322432)$$
$$000111001 \mapsto 000110101\ (359572755909315080448)$$
$$001001011 \mapsto 001001011\ (450633542546718000000)$$
$$001001101 \mapsto 001001101\ (480928605792476688000)$$
$$001010011 \mapsto 001010011\ (524261153928446022528)$$
$$001010101 \mapsto 001010101\ (678501146123915400000)$$

We can define the balancedness of a word in $\mathbb{W}_{p,q}$ by the productional ordering, i.e. a word is more balanced than words with smaller product.
We can see that the words $000011101$ and $000111001$ have larger productional order than lexicographical order and
that the words $000011011, 000101011$ and $000110101$ have smaller productional order than lexicographical order.
For the rest of the words these orders are the same.
We can therefore define that the words $000011101$ and $000111001$ are \emph{over balanced} and that the words $000011011, 000101011$ and $000110101$ are \emph{under balanced}.
The rest of the words are \emph{equally balanced}.


\section{Partial product}

In this section we prove the counterpart for Conjecture \ref{c3}.

\begin{example}
If $(p,q)=(3,8)$ then the set of all orbits is $\mathbb{W}_{3,8}=\{00000111,00001011,$ $00001101,00010011,00010101,00011001,00100101\}$.
The base-2 orbits and the approximated partial products of those orbits are listed in Table \ref{table:tableMul}.
From these partial products we can see the partial ordering of the set $\mathbb{W}_{3,8}$ with respect to product, which is drawn in Figure \ref{pic3}.
Notice that it is different from the Figure \ref{pic1}.
\end{example}

\begin{table}[ht]
\begin{center}
\begin{tabular}{|c|c|c|c|c|c|c|c|c|c|c|c|c|c|c|}
\hline
\multicolumn{2}{|c|}{00000111} & \multicolumn{2}{|c|}{00001011}  & \multicolumn{2}{|c|}{00001101} & \multicolumn{2}{|c|}{00010011} &
\multicolumn{2}{|c|}{00010101} & \multicolumn{2}{|c|}{00011001} & \multicolumn{2}{|c|}{00100101} & $\cdot 10^{x_i}$ \\
\hline
$\mathcal{I}_i$ & $\mathcal{P}_i$ & $\mathcal{I}_i$ & $\mathcal{P}_i$ & $\mathcal{I}_i$ & $\mathcal{P}_i$ & $\mathcal{I}_i$ & $\mathcal{P}_i$ &
$\mathcal{I}_i$ & $\mathcal{P}_i$ & $\mathcal{I}_i$ & $\mathcal{P}_i$ & $\mathcal{I}_i$ & $\mathcal{P}_i$ & $x_i$\\
\hline
7 & 7 & 11 & 11 & 13 & 13 & 19 & 19 & 21 & 21 & 25 & 25 & 37 & 37 & 0\\
\hline
14 & 0.98 & 22 & 2.4 & 26 & 3.4 & 38 & 7.2 & 42 & 8.8 & 35 & 8.7 & 41 & 15 & 2\\
\hline
28 & 0.27 & 44 & $1.1$ & 52 & 1.8 & 49 & 3.5 & 69 & 6.1 & 50 & 4.4 & 73 & 11 & 4\\
\hline
56 & 0.15 & 88 & 0.94 & 67 & 1.2 & 76 & 2.7 & 81 & 4.9 & 70 & 3.1 & 74 & 8.2 & 6\\
\hline
112 & 0.17 & 97 & 0.91 & 104 & 1.2 & 98 & 2.6 & 84 & 4.1 & 100 & 3.1 & 82 & 6.7 & 8\\
\hline
131 & 0.23 & 133 & 1.2 & 134 & 1.6 & 137 & 3.6 & 138 & 5.7 & 140 & 4.3 & 146 & 9.8 & 10\\
\hline
193 & 0.44 & 176 & 2.1 & 161 & 2.6 & 152 & 5.5 & 162 & 9.3 & 145 & 6.2 & 148 & 15 & 12\\
\hline
224 & 0.97 & 194 & 4.1 & 208 & 5.5 & 196 & 11 & 168 & 16 & 200 & 12 & 164 & 24 & 14\\
\hline
\end{tabular}
\caption{The base-2 orbits and the (approximated) partial product in $\mathbb{W}_{3,8}$.}
\label{table:tableMul}
\end{center}
\end{table}

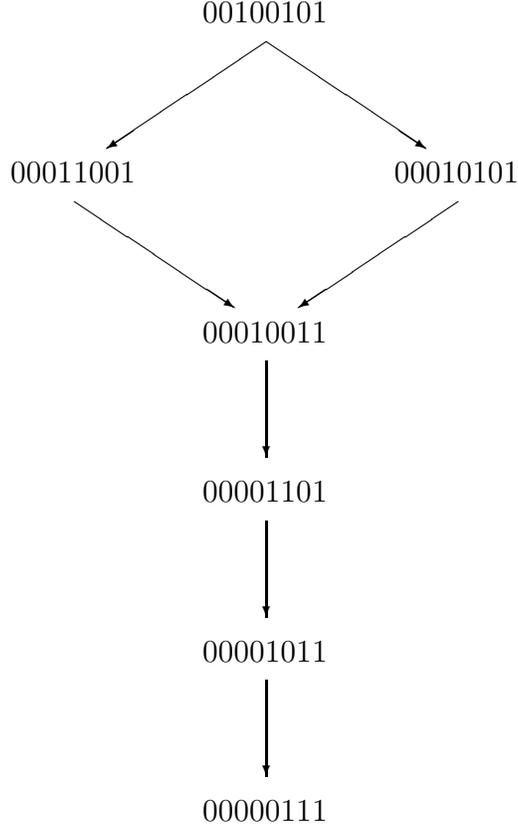
\begin{figure}[ht]
\setlength{\unitlength}{0.85mm}
\begin{picture}(150,125)
\put(85,125){$00100101$}
\put(95,122){\vector(-3,-2){25}}
\put(95,122){\vector(3,-2){25}}

\put(55,100){$00011001$}
\put(65,97){\vector(3,-2){25}}
\put(115,100){$00010101$}
\put(125,97){\vector(-3,-2){25}}

\put(85,75){$00010011$}
\put(95,72){\vector(0,-1){15}}

\put(85,50){$00001101$}
\put(95,47){\vector(0,-1){15}}

\put(85,25){$00001011$}
\put(95,22){\vector(0,-1){15}}

\put(85,0){$00000111$}
\end{picture}
\caption{The partially ordered set $(\mathbb{W}_{3,8},\prec_p)$. Similar to the partial sum, if $p$ and $q$ grow large it is hard to yield any other general results from the poset $(\mathbb{W}_{p,q},\prec_p)$ except the two extremal elements.}
\label{pic3}
\end{figure}

\begin{theorem}\label{t6}
For any integers $1\leq p < q-p$, the most unbalanced orbit $u=0^{q-p}1^p\in\mathbb{W}_{p,q}$
is the greatest element in $(\mathbb{W}_{p,q},\prec_p)$. In other words, for any $w\in\mathbb{W}_{p,q}$,
$$ \mathcal{P}_i(u) \leq \mathcal{P}_i(w)\quad {\rm for\ all}\ 1\leq i\leq q.$$
\end{theorem}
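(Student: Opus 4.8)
The plan is to reuse, essentially verbatim, the multiplier data constructed in the proof of Theorem~\ref{t5}. There, for every $w\in\mathbb{W}_{p,q}$ with $w\neq u$ (equivalently $n\geq 2$), the orbits of $u$ and $w$ are split according to the cases $1.1$, $1.2$, $2.1$--$2.5$, and in each case one exhibits positive rationals $m_1,\dots,m_q$, namely the numbers listed in the Multiplier$(i)$ columns of Tables~\ref{table:table1} and~\ref{table:table2}, together with the termwise bounds
$$m_k\,(u_k)_2\ \leq\ (w_k)_2\qquad(1\leq k\leq q),$$
and one checks that $\prod_{k=1}^{q}m_k\geq 1$. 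The additional observation needed here is that in every one of these cases the multipliers obey a uniform monotone pattern: $m_k\geq 1$ for $1\leq k\leq q-p-1$ (the entries occurring there are among $2,4,\tfrac{21}{8},\tfrac{13}{8},\tfrac{11}{8},\tfrac{8}{3},\tfrac{5}{3},\tfrac{3}{2},1$), while $m_k\leq 1$ for $q-p\leq k\leq q$ (the entries there are among $1,\tfrac{3}{4},\tfrac{2}{3},\tfrac{5}{6},\tfrac{4}{7},\tfrac{1}{2}$). This is read straight off the two tables and the displayed lists of inequalities in the proof of Theorem~\ref{t5}; the degenerate-looking sub-cases with very small $q-p$ or $p$ are precisely the ones already excluded there (e.g.\ $r_1=r_2=1$ is impossible when $p<q-p$), or they force $w=u$.

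From this pattern I would deduce that the partial products $M_i:=\prod_{k=1}^{i}m_k$ satisfy $M_i\geq 1$ for \emph{every} $i$ with $1\leq i\leq q$. Indeed, for $i\leq q-p-1$ the number $M_i$ is a product of factors that are all $\geq 1$, so $M_i\geq 1$. For $q-p-1\leq i\leq q$ write $M_i=M_q\big/\prod_{k=i+1}^{q}m_k$; since $i+1\geq q-p$, every factor in this denominator is $\leq 1$, hence $M_i\geq M_q\geq 1$. The two ranges cover all $i$, so $M_i\geq 1$ throughout.

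Fix now $i$ with $1\leq i\leq q$. If $w=u$ there is nothing to prove. Otherwise, since all the numbers $(u_k)_2$ and $m_k$ are positive, the termwise bounds multiply to give
$$\mathcal{P}_i(u)=\prod_{k=1}^{i}(u_k)_2=\frac{1}{M_i}\prod_{k=1}^{i}m_k\,(u_k)_2\ \leq\ \frac{1}{M_i}\prod_{k=1}^{i}(w_k)_2=\frac{\mathcal{P}_i(w)}{M_i}\ \leq\ \mathcal{P}_i(w),$$
the last step using $M_i\geq 1$. This is exactly $\mathcal{P}_i(u)\leq\mathcal{P}_i(w)$, the assertion of the theorem.

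The genuine work---and the only real obstacle---is the bookkeeping claim of the first paragraph: one must verify, case by case against Tables~\ref{table:table1} and~\ref{table:table2}, that no multiplier indexed by $k\leq q-p-1$ drops below $1$ and no multiplier indexed by $k\geq q-p$ rises above $1$, and that the constraints defining each sub-case of the proof of Theorem~\ref{t5} keep $q-p$ and $p$ large enough for the indexing to make sense. Once that uniform structure is in hand, the passage from the case $i=q$ (which is Theorem~\ref{t5}) to all partial products $1\leq i\leq q$ is the short computation displayed above, and no new estimates on base-$2$ expansions are required.
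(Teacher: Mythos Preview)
Your argument is correct, and it takes a genuinely different route from the paper's proof. The paper does \emph{not} reuse the full multiplier vector $(m_1,\dots,m_q)$ from Theorem~\ref{t5}. Instead it splits the index range into three parts: for $1\leq i\leq q-p-1$ it uses only the crude termwise comparison $(u_i)_2<(w_i)_2$ coming from Table~\ref{table:table3} (no multipliers at all); for $q-p+2\leq i\leq q$ it invokes Theorem~\ref{t5} as a black box at $i=q$ and then works backwards using $(u_i)_2>(w_i)_2$; and for the single remaining index $i=q-p$ it carries out a \emph{fresh} case analysis (cases $1$, $2.1$--$2.4$, Table~\ref{table:table6}) with new, simpler multipliers supported only on three or four indices near $q-p$. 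Your observation---that every multiplier in Tables~\ref{table:table1} and~\ref{table:table2} already satisfies $m_k\geq 1$ for $k\leq q-p-1$ and $m_k\leq 1$ for $k\geq q-p$---makes this extra case analysis unnecessary: once that monotone pattern is verified, the two-line computation $M_i\geq\min(M_{q-p-1},M_q)\geq 1$ handles every $i$ at once. The trade-off is that the paper's argument is more modular (it uses Theorems~\ref{t4} and~\ref{t5} as black boxes and adds a short independent estimate), whereas yours is more economical but depends on inspecting the internal structure of the proof of Theorem~\ref{t5} rather than just its statement. Your verification of the monotone pattern and of the index-range constraints in each sub-case is the only bookkeeping required, and it checks out against the tables.
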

\begin{proof}
We mark $w=0^{r_1}1^{s_1}0^{r_2}1^{s_2}\ldots 0^{r_n}1^{s_n}$, where $\sum_{i=1}^{n}{s_i}=p$, $\sum_{i=1}^{n}{r_i}=q-p$, $n\geq2$ and $\forall i: r_i,s_i > 0$.
The orbits of $w$ and $u$ are marked with $(w_1,\ldots,w_q)$ and $(u_1,\ldots,u_q)$.

We use Table \ref{table:table3} from the proof of Theorem \ref{t4} and the same kind of deduction.
We get that $\mathcal{P}_i(u) \leq \mathcal{P}_i(w)$ for $1\leq i\leq q-p-1$.
If we suppose that $\mathcal{P}_{q-p}(u) \leq \mathcal{P}_{q-p}(w)$ then we get that
$\mathcal{P}_{q-p+1}(u) \leq \mathcal{P}_{q-p+1}(w)$, since $(u_{q-p+1})_2 < (w_{q-p+1})_2$.
From Theorem \ref{t5} we directly get that $\mathcal{P}_q(u) < \mathcal{P}_q(w)$.
Because $(u_i)_2 > (w_i)_2$ for $q-p+2\leq i \leq q$, we get that $\mathcal{P}_i(u) \leq \mathcal{P}_i(w)$ for $q-p+2\leq i\leq q$.

Again, the only thing we need to prove anymore is our assumption $\mathcal{P}_{q-p}(u) \leq \mathcal{P}_{q-p}(w)$ in the previous paragraph.
We divide the proof into two cases: 1) $n\geq3$ and 2) $n=2$.

\begin{table}[ht]
\begin{center}
\begin{tabular}{|l|l|l|l|l|l|c|c|c|c|c|}
\hline
\multicolumn{1}{|c|}{$i$} & \multicolumn{5}{|c|}{Prefixes of $w_i$} & \multicolumn{5}{|c|}{Multiplier(i)} \\ \cline{2-11}
& \multicolumn{1}{|c|}{1} & \multicolumn{1}{|c|}{2.1} & \multicolumn{1}{|c|}{2.2} & \multicolumn{1}{|c|}{2.3} & \multicolumn{1}{|c|}{2.4} & 1 & 2.1 & 2.2 & 2.3 & 2.4 \\
\hline
$q-p-3$ & & 001 & & & & & 2 & & & \\
\hline
$q-p-2$ & 01 & 001 & 001101 & 001011 & 001 & 2 &  & 13/8 & 11/8 & 4/3 \\
\hline
$q-p-1$ & 01 & 01 & 01 & 01011 & 01 & & & & 11/8 & 4/3 \\
\hline
$q-p$ & 01 & 01 & 011 & 011 & 01 & 1/2 & 1/2 & 3/4 & 3/4 & 2/3 \\
\hline
\end{tabular}
\caption{Prefixes and multipliers of $w_i$ from the proof of Theorem \ref{t6}.}
\label{table:table6}
\end{center}
\end{table}

1) Because $n\geq3$ the words $w_{q-p}, w_{q-p-1}$ and $w_{q-p-2}$ start with $01$.
From Lemma \ref{l4}, 1 and 5, we now directly get that $1/2\cdot(u_{q-p})_2 < (w_{q-p})_2$ and $2\cdot(u_{q-p-2})_2 < (w_{q-p-2})_2$.
This gives our claim:
$$ \mathcal{P}_{q-p}(u)=\prod_{i=1}^{q-p}{(u_{i})_2} = \prod_{i=1}^{q-p-3}{[(u_{i})_2]}\cdot 2(u_{q-p-2})_2\cdot (u_{q-p-1})_2 \cdot 1/2(u_{q-p})_2
< \prod_{i=1}^{q-p}{(w_{i})_2} = \mathcal{P}_{q-p}(w). $$

2) We divide this case into four subcases depending on the values of $r_i$ and $s_i$.
Notice that case $r_1,r_2=1$ is impossible because then we would have $2\leq s_1+s_2=p<q-p=r_1+r_2=2$.

2.1) $r_1,r_2\geq2$. Now the words $w_{q-p}$ and $w_{q-p-1}$ start with 01 and the words $w_{q-p-2}$ and $w_{q-p-3}$ start with 001.
From Lemma \ref{l4}, 1 and 5, we again get that $1/2\cdot(u_{q-p})_2 < (w_{q-p})_2$ and $2\cdot(u_{q-p-3})_2 < (w_{q-p-3})_2$.
This gives our claim:
$$ \mathcal{P}_{q-p}(u) = \prod_{i=1}^{q-p-4}{[(u_{i})_2]}\cdot 2(u_{q-p-3})_2\cdot (u_{q-p-2})_2 \cdot (u_{q-p-1})_2\cdot 1/2(u_{q-p})_2
< \prod_{i=1}^{q-p}{(w_{i})_2} = \mathcal{P}_{q-p}(w).$$

2.2) $r_1=1,r_2\geq2$ and $s_1\geq1,s_2\geq2$. This is identical to the case 2.3 in the proof of Theorem \ref{t5}, from which
we get that $w_{q-p}$ starts with $011$,$w_{q-p-1}$ starts with $01$ and $w_{q-p-2}$ starts with $001101$.
From Lemma \ref{l5}, 4 and 6, we similarly get that $3/4\cdot(u_{q-p})_2 < (w_{q-p})_2$ and $13/8\cdot(u_{q-p-2})_2 < (w_{q-p-2})_2$.
This gives our claim (notice that $13/8\cdot3/4>1$):
$$ \mathcal{P}_{q-p}(u) < \prod_{i=1}^{q-p-3}{[(u_{i})_2]}\cdot 13/8(u_{q-p-2})_2\cdot (u_{q-p-1})_2 \cdot 3/4(u_{q-p})_2
< \prod_{i=1}^{q-p}{(w_{i})_2} = \mathcal{P}_{q-p}(w).$$

2.3) $r_1=1,r_2\geq2$ and $s_1\geq2,s_2\geq1$. This is similar to the case 2.4 in the proof of Theorem \ref{t5}, from which
we get that $w_{q-p}$ starts with $011$, $w_{q-p-1}$ starts with $01011$ and $w_{q-p-2}$ starts with $001011$.
From Lemma \ref{l5}, 2 and 6, we similarly get that $3/4\cdot(u_{q-p})_2 < (w_{q-p})_2$,
$11/8\cdot(u_{q-p-1})_2 < (w_{q-p-1})_2$ and $11/8\cdot(u_{q-p-2})_2 < (w_{q-p-2})_2$.
This gives our claim (notice that $11/8\cdot11/8\cdot3/4>1$):
$$ \mathcal{P}_{q-p}(u) < \prod_{i=1}^{q-p-3}{[(u_{i})_2]}\cdot 11/8(u_{q-p-2})_2\cdot 11/8(u_{q-p-1})_2 \cdot 3/4(u_{q-p})_2
< \prod_{i=1}^{q-p}{(w_{i})_2} = \mathcal{P}_{q-p}(w).$$

2.4) $r_1=1,r_2\geq2$ and $s_1,s_2=1$. Because $r_2\geq2$ the words $w_{q-p},w_{q-p-1}$ and $w_{q-p-2}$ start with $01,01$ and $001$.
Because $q-p=s_1+s_2=2$ we have $u_{q-p}=0110^{q-3}$, $u_{q-p-1}=00110^{q-4}$ and $u_{q-p-2}=000110^{q-5}$.
Now we get that $2/3\cdot(u_{q-p})_2 = (0\frac{2}{3}\frac{2}{3}0^{q-3})_2 = (010^{q-2})_2 < (w_{q-p})_2$,
$4/3\cdot(u_{q-p-1})_2 = (00\frac{4}{3}\frac{4}{3}0^{q-4})_2 = (00\frac{6}{3}0^{q-3})_2 = (010^{q-2})_2 < (w_{q-p-1})_2$ and 
$4/3\cdot(u_{q-p-2})_2 = (000\frac{4}{3}\frac{4}{3}0^{q-5})_2 = (000\frac{6}{3}0^{q-4})_2 = (0010^{q-3})_2 < (w_{q-p-2})_2$.
This gives our claim (notice that $4/3\cdot4/3\cdot2/3>1$):
$$ \mathcal{P}_{q-p}(u) < \prod_{i=1}^{q-p-3}{[(u_{i})_2]}\cdot 4/3(u_{q-p-2})_2\cdot 4/3(u_{q-p-1})_2 \cdot 2/3(u_{q-p})_2
< \prod_{i=1}^{q-p}{(w_{i})_2} = \mathcal{P}_{q-p}(w).$$
\end{proof}


\section*{Acknowledgements}

I would like to thank my supervisor Luca Zamboni for introducing me to this problem.


\section*{References}

\end{document}